\newcommand{\monthyear}[1]{%
  \def\@monthyear{\uppercase{#1}}}
\newcommand{\volnumber}[1]{%
  \def\@volnumber{\uppercase{#1}}}
\def\ps@plain{\ps@empty
  \def\@oddfoot{\@monthyear \hfil \thepage}%
  \def\@evenfoot{\thepage \hfil \@volnumber}}
\def\ps@firstpage{\ps@plain}
\def\ps@headings{\ps@empty
  \def\@evenhead{%
    \setTrue{runhead}%
    \def\thanks{\protect\thanks@warning}%
    \uppercase{The Fibonacci Quarterly}\hfil}%
  \def\@oddhead{%
    \setTrue{runhead}%
    \def\thanks{\protect\thanks@warning}%
    \hfill\uppercase{Comma Sequence}}%
  \let\@mkboth\markboth
  \def\@evenfoot{%
    \thepage \hfil \@volnumber}%
  \def\@oddfoot{%
    \@monthyear \hfil \thepage}%
  }%
\theoremstyle{plain}
\numberwithin{equation}{section}
\newtheorem{thm}{Theorem}[section]
\newtheorem{theorem}[thm]{Theorem}
\newtheorem{lemma}[thm]{Lemma}
\newtheorem{conj}[thm]{Conjecture}
\begin{document}
\monthyear{Month Year}
\volnumber{Volume, Number}
\setcounter{page}{1}


 \newcommand{\seqnum}[1]{\href{https://oeis.org/#1}{\underline{#1}}}

\title{The Comma Sequence: A Simple Sequence With Bizarre Properties}

\author{\'{E}ric Angelini}
\address{32 dr\`{e}ve de Linkebeek\\
1640 Rhode-Saint-Gen\`{e}se, BELGIUM}
\email{eric.angelini@skynet.be}

\author{Michael S. Branicky}
\address{University of Kansas\\
 1520 W 15th St. \\
 Lawrence, KS 66045, USA}
\email{msb@ku.edu} 

\author{Giovanni Resta}
\address{Istituto di Informatica e Telematica del CNR\\
via Moruzzi 1\\
56124 Pisa, ITALY}      
\email{g.resta@iit.cnr.it}

\author{N. J. A. Sloane}
\address{The OEIS Foundation Inc.\\
11 South Adelaide Ave.\\
Highland Park, NJ 08904, USA}
\email{njasloane@gmail.com}

\author{David W. Wilson}
\address{3 West St., Apt. 1\\
Lebanon, NH 03766, USA}
\email{davidwwilson710@gmail.com}

\begin{abstract}
The ``comma sequence'' starts with $1$ and is defined by the property that
if $k$ and $k'$ are consecutive terms, the two-digit number formed from the last digit of $k$ and the first
digit of $k'$ is equal to the difference $k'-k$. If there is more than one such $k'$, choose the smallest,
but if there is no such $k'$ the sequence terminates.
The sequence begins $1, 12, 35, 94, 135, \ldots$, and, surprisingly, ends
at term $2137453$, which is $99999945$.
The paper analyzes the sequence and its generalizations to other starting values and other bases. A slight change in the rules allows infinitely long
comma sequences to exist.
\end{abstract}

\maketitle

\section{Introduction}\label{Sec1}

In the sequence named after the father of this journal
the next term depends on the current term and the previous term.
That is so old hat, so Twelfth Century!
In the comma sequence the next term depends on the current term  {\em and on the next term itself}, and may not even exist.

The definition says that if the current term $k$ has decimal expansion $b\ldots cd$
and the next term has decimal expansion $k' = e\ldots fg$, then $k'-k$ must
be equal to the one- or two-digit decimal number $de~ (= 10d+e)$.
We call $de$ the {\em comma-number} associated with the comma separating $k$ and $k'$,
and $k'$ the {\em comma-successor} to $k$.
The sequence starts with  $1$, and if there is a choice for $k'$, pick the smallest, and if no such $k'$ exists,
the sequence terminates.  That's  the definition.
(We give a more formal definition in \S\ref{SecCS}.)

The first few terms of the  sequence are shown in the top line  of the following table,
with the comma-numbers written directly under the commas.
In order to do this, we have exaggerated the spaces around the commas.
\begin{equation}\label{Eq1}
\begin{array}{cccccccccccccccccc}
1 &  , & 12 & ,  &  35 &  , &  94 & ,   & 135 & ,   & 186 & ,  & 248& , &331 & , & 344 & \ldots \\
~ & 11&  ~ & 23&  ~  & 59&   ~ & 41& ~    & 51 &   ~  & 62& ~   &83& ~   & 13 & ~  & \ldots
\end{array}
\end{equation}
(By the way, the reader should not miss the  excellent dramatization of the initial terms given in \cite{Vid}.)
Why is the second term of the sequence $12$?  Well, if the second term exists, its first
digit is at least $1$, so the first comma number (the number around
the first comma) is at least $11$, and so the second term is at least $1 + 11 = 12$,
and $12$ satisfies the required condition.
The second comma number is therefore at least $21$, so the third term
of the sequence is at least $12 + 21 = 33$.
However,  $33$ does not work, since the comma number would be $23$, and $12 + 23 = 35$, not $33$.
But $35$ works.  And so on. There are only ten candidates for the comma-number, so the calculations are easy.

At this point everything looks perfectly elementary and straightforward, and the reader may wonder why
we are bothering with this sequence.  The answer is that when we reach term number $2137453$,
which happens to be $99999945$, there is no choice for the next term, and the sequence ends.
This is really extraordinary.  With such a simple definition, one would expect that either the sequence
will fail after a few terms, or will continue forever.  Where did $99999945$ come from? (See
Theorem~\ref{Th1}.)

If we had taken the first term to be $3$, in fact, the sequence does die right away. The second term
must be at least $3 + 31 = 34$. Now $34$ and $35$ do not work, but $36$ does, and the
sequence begins $3,~36$. The second comma-number could only be one of $61, 62, \cdots, 69$,
but they all fail. $61$ fails because $36+61 = 97$ would produce a comma-number of $69$, not $61$.
So if we start with $3$, the comma sequence has just two terms.

The sequence (starting at 1) was contributed to the {\em On-Line Encyclopedia of Integer Sequences}
(or OEIS)  \cite{OEIS} by one of the authors (E.A.)  in 2006. It is entry \seqnum{A121805}.
At that time it was not known if the sequence was finite or infinite, but later that year
Edwin Clark determined that the sequence contains exactly $2137453$ terms.
The sequence  was mentioned in {\em Pour la Science} in 2008 \cite{PLS}.
In 2016, E.A. posted a message about the sequence to
the {\em Sequence Fans} mailing list, summarizing  unpublished work on it.
Some of the results proved in the present article were found by D.W.W. in 2007, and
were stated without proof in that message.
A copy of E.A.'s message has been preserved as part of the OEIS entry \cite{TCS}.


We conjecture that the sequence is finite for any starting value (see \S\ref{SecGraphs}).
The lengths for starting values $1$ through $8$ are
\begin{equation}\label{Eq1.2}
   2137453, 194697747222394, 2, 199900, 19706, 209534289952018960, 15, 198104936410,
\end{equation}
and the corresponding  final terms are respectively
\begin{equation}\label{Eq1.3}
    99999945, 9999999999999918, 36, 9999945, 999945, 9999999999999999936, 936, 9999999999972
\end{equation}
(\seqnum{A330128}, \seqnum{A330129}\footnote{Six-digit numbers prefixed by A refer to entries in the OEIS.}).

The goal of this paper is to try to explain these two bizarre sequences.
We will not be entirely successful:
we cannot predict what the length will be
for a given starting value, but we can at least give a probabilistic
model that explains the huge fluctuations
(\S\ref{SecMines}), and we can explain how the final terms are related to the number of
terms (\S\ref{SecCS}), and what numbers can appear as the final terms (Theorem~\ref{Th1}).

The definition of a comma sequence requires us to always choose the smallest possible successor.
In most cases there is no choice, but occasionally there are two possibilities for the comma-number
(never more), and so two candidates for the comma-successor.
We will refer to the one or two possible
successors as the {\em comma-children} of the previous number.

In the next section we give the formal definitions of a comma
sequence, comma-number, comma-successor, and comma-child in any base $b \ge 2$,
and make a rough estimate for how fast comma-sequences grow.
The definition of comma-number leads naturally to the notion
of the comma transform of a sequence, which is briefly
discussed in \S\ref{SecCT}.  In \S\ref{SecGraphs} we
clarify the distinction between successor and child by defining
the {\em successor graph} $G_s$, where the nodes have out-degree $0$ or $1$, and
the {\em child graph} $G_c$, where some nodes have out-degree $2$.
This section also contains a list of  basic sequences associated
with these graphs,
which will serve as a guide  to \S\ref{SecCST}.

The long \S\ref{SecCST} contains theorems that classify
the {\em landmines}, that is,  numbers without successors,
or equally, without children (Theorem~\ref{Th1}),
numbers with two children (Theorem~\ref{Th2}), numbers
without predecessors (Theorem~\ref{Th3}), and numbers that are not comma-children (Theorem~\ref{Th4}).
In \S\ref{SubSecRoot}, Equation~\eqref{EqRs} traces the numbers in base $10$ back to their roots
in $G_s$.

The final subsection, \S\ref{SubSecKon}, contains the important
Theorem ~\ref{ThKon}, which proves that for
all bases $b \ge 2$, the child graph $G_c$ contains an  infinitely
long path. This  is something we believe is definitely not true for $G_s$, except in base $2$.
In other words, if
we have the freedom to optionally choose the other comma-child
as the next term if there are two comma-children, we can
avoid all the landmines. The proof of this theorem, however,
uses  K\"{o}nig's Infinity Lemma, a version of the Axiom of Choice,
and is not constructive.
We discuss these infinite paths further in \S\ref{SecB3H} and \S\ref{SecB10}.

In \S\ref{SecAP} we study the periodicity of the comma-numbers, and give the analysis that
underlies the computer program that made it possible for us to compute very large
numbers of terms of these sequences.
Section \ref{SecMines} investigates how likely it is for a comma sequence to hit a landmine.
A model is proposed which implies that the expected length of
a comma sequence in base $b$  is asymptotic to $e^{2b}$ for large $b$.
This result was obtained with help from the OEIS itself,
which suggested a match between this question and an apparently unrelated number-theoretic sequence.

Comma sequences in base $2$, which is exceptional, are discussed in \S\ref{SecB2}.
For base $3$ (\S\ref{SecB3}), we can prove that all comma-sequences are finite (Theorem~\ref{ThGR}),
and we  believe we know exactly what the comma-numbers are (Conjecture~\Ref{ConjB3.1}).

The final two sections, \S\ref{SecB3} and \S\ref{SecB10}, are concerned
with infinite paths in the child graphs in bases $3$ and $10$.
For base $3$ we prove that there is a unique infinite path starting at node $1$
 and give an explicit construction for it (Theorem~\ref{Stairway3}).
For base $10$ we have no such conjecture, although we can say with certainty
what the first $10^{84.8}$ terms of the infinite path (or paths) are.

\vspace*{+.1in}

\vspace*{+.1in}

Notation.  The terms of all the sequences discussed here are assumed to be positive integers.
A centered dot ($\cdot$) is a multiplication sign. In base $b$,
a {\em digit} means any number between $0$ and $b-1$, even when $b \ne 10$.
A string of digits with  subscript $b$ signifies a number written in base $b$.
We will usually omit this subscript if the meaning is clear from the context.
This will be especially true in the proofs,
where we will rely on a  sympathetic reader to realize without being told when
 $xy$ (say) is a two-digit number rather than  a product.
 The $n$th term of a sequence is often denoted by $a(n)$.
The leading digit of $n$ will be denoted by $\delta(n)$.
The terms comma-number $(cn(n))$, comma-successor, and comma-child are defined in \S\ref{SecCS}.
The graphs $G_s$ and $G_c$ are defined in \S\ref{SecGraphs}.


\section{Formal definitions of comma sequences; rate of growth}\label{SecCS}
The {\em comma sequence} in base $b \ge 2$ with initial term $s \ge 1$
is the sequence $\{a(n): n \ge 1\}$ defined as follows: $a(1)=s$,
and for $n \ge 2$, if the base-$b$ expansion
of $a(n)$ is $d_1 d_2 \ldots d_m$ then $a(n+1) = e_1 e_2 \ldots e_p$ is the smallest
number such that the base-$b$ number $d_m e_1 ~ (= d_m\cdot b + e_1)$ is equal to the difference
$a(n+1)-a(n)$. If no such number exists, the sequence ends with $a(n)$.
If $a(n+1)$ exists, $d_m e_1$ is the {\em comma-number} associated with the comma between
 $a(n)$ and $a(n+1)$, and $a(n+1)$ is the {\em comma-successor} to $a(n)$.

 One may think of the comma-number as a kind of fattened-up comma, which not only separates the terms
 of the sequence, but says how far apart they are.

 There may be more than one choice for $a(n+1)$, so we define more
 generally  a {\em comma-child}
 of a number $k =  d_1 d_2 \ldots d_m$ to be any number $k' = e_1 e_2 \ldots e_p$
 such that $k' - k = d_m e_1$.  In accordance with the law of
 primogeniture, the first-born child (i.e., the smallest) is the  comma-successor.

Note that
\begin{equation}\label{Eq2.1}
a(n+1) ~=~ a(n) ~+~ d_me_1
\end{equation}
and
\begin{equation}\label{Eq2.2}
1 ~\le~ d_m e_1 ~\le~ b^2-1\,,
\end{equation}
although since $e_1 \neq 0$,  $d_m e_1$ cannot take the values $b, 2\cdot b, \ldots, (b-1)\cdot b$.

It follows that a comma sequence is strictly monotonically increasing (and no terms are zero).
Note also  that if $k' = e_1 e_2 \ldots e_p$  is a successor or child of $k =  d_1 d_2 \ldots d_m$,
then $k$ is uniquely determined by $k'$. For we have $d_m  + e_1 \equiv e_p \pmod{b}$,
which determines $d_m$, and then $k = k' - d_me_1$.
Furthermore, since the comma-number determines the leading digit of a child,
all children have distinct leading digits, and since the comma-number is less than $b^2$,
a positive number can have at most two children.

When the terms are very  large, since
the sequence increases by less than $b^2$ at each step, the leading digit will rarely  change.
A useful rule of thumb is  that
most of the time, a term $d_1 d_2 \ldots d_m$ will
be followed by the comma-number $d_md_1$, and  will have  comma-successor
\begin{equation}\label{Eq2.3}
d_1 d_2 \ldots d_m ~+~ d_m d_1.
\end{equation}
This remark  is the key to extending these sequences to large numbers of terms.
We will return to this point in \S\ref{SecAP}.

An elementary calculation based on \eqref{Eq2.2} shows that the
 average of the {\em possible} values for the comma-number is  $b^2/2$.

  \begin{figure}[!htb]
 \centerline{\includegraphics[angle=0, width=3.4in]{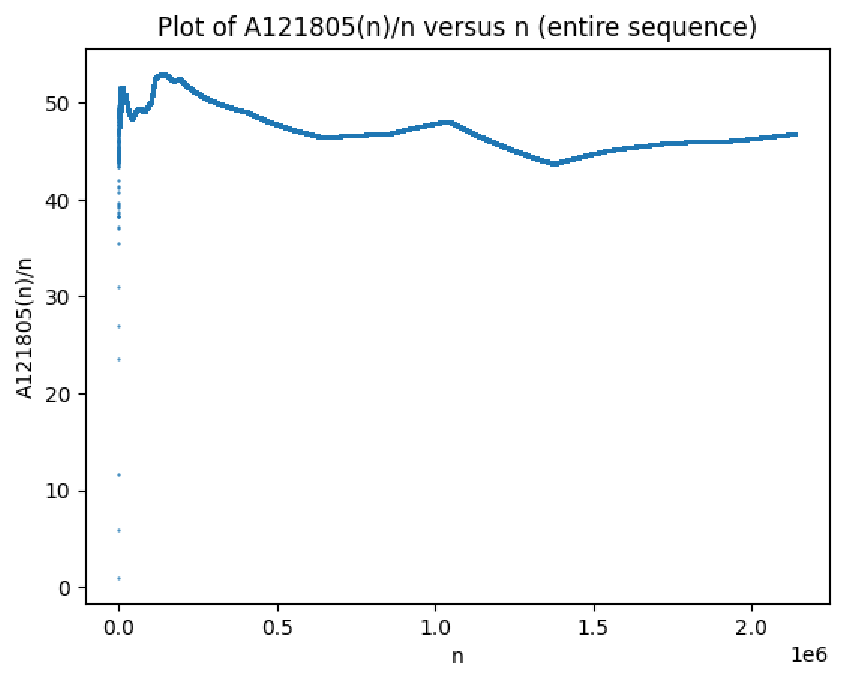}}
 \caption{Plot of $A121805(n)/n$ as $n$ goes from $1$ to $2137453$.
 The final point has ordinate $46.78\ldots$.}
 \label{Fig1}
 \end{figure}

Of course the actual comma-numbers are not random (in fact they are
 extremely regular, as we will see in  \S\ref{SecAP}),
 but nevertheless it seems to be another  reasonable rule of thumb
 that the $n$th term of a comma sequence in base $b$ is roughly $nb^2/2$.
 In the original comma sequence  \eqref{Eq1}, for example, there are $2137452$ comma-numbers,
 whose total is $99999944$, with average value $46.78\ldots$, reasonably close to $b^2/2= 50$.
 However, as Fig.~\ref{Fig1} shows, ``reasonably close'' is about all we can say.

Similarly, the $k$th number in \eqref{Eq1.3} is reasonably close to $50$ times the
$k$th number in \eqref{Eq1.2}.\footnote{To make a fair comparison,  remember that the comma
sequence corresponding to the $k$th number in \eqref{Eq1.2} and \eqref{Eq1.3} has initial term $k$.}


\section{The comma transform}\label{SecCT}
If $\{a(n): n \ge n_0\}$ is a sequence of base-$b$ numbers,
the sequence of comma-numbers corresponding to the commas separating
the $a(n)$ may be called the {\em comma transform} of the sequence.

For example, the comma transform of the nonnegative numbers (in base $10$) is
\begin{equation}\label{Eq367362}
1,  12, 23, 34, 45, 56, 67, 78, 89, 91, 1, 11, 21, 31, \ldots\,.
\end{equation}
The comma transform seems to be a new notion, since this
 sequence (\seqnum{A367362})  was only added to the OEIS in 2023.

The comma sequence as defined in the previous
section can now be redefined as the lexicographically
earliest sequence of positive numbers starting with $s$
 whose sequence of first differences coincides
with its comma transform.


\section{The successor graph and the child graph}\label{SecGraphs}
To clarify the distinction between comma-successor and comma-child, we
define two directed graphs for each base $b \ge 2$.  The {\em successor graph} $G_s$
 has a node for every positive integer
and an edge from $i$ to $j$ if $j$ is the successor to $i$,
and the {\em child graph} $G_c$  has an edge from $i$ to $j$ if $j$ is a child of $i$.
The graph $G_s$ is a subgraph of $G_c$; they are both locally finite.
A portion of the base-$3$ child graph is shown in Fig.~\ref{Fig4}  towards the end of this article.

The following  summary of the properties of these two graphs will also serve as a
guide to the next  section.

By definition, the nodes of $G_s$ have out-degree $0$ or $1$, and as we saw in~\S\ref{SecCS},
each node also has in-degree $0$ or $1$. Thus $G_s$ consists of a number
of isolated nodes and a collection of disjoint paths with edge-length at least $1$.
In base $10$ there are
exactly four isolated nodes, $18, 27, 54,$ and $63$.
Numerical data and the probabilistic arguments given in \S\ref{SecMines} make it likely that
 all the paths in $G_s$ are finite for $b \ge 3$, although we
are only  able to prove this for base $3$.

\begin{conj}\label{ConjGs}
For any base $b \ge 3$ the successor graph $G_s$ does not contain any path of infinite length.
\end{conj}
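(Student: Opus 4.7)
The plan: since Conjecture~\ref{ConjGs} is explicitly stated as a conjecture and not established in any case beyond base $3$ (Theorem~\ref{ThGR}), my proposal is a strategy rather than a complete proof. The approach combines two ingredients: a landmine-density lower bound, and a structural analysis of the quasi-periodic dynamics in $G_s$.

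For the density side, I would leverage the classification of landmines from Theorem~\ref{Th1} to lower-bound the count of landmines in $[1,N]$. The heuristic of \S\ref{SecMines} says that among the $b^2$ formally admissible comma-numbers for a given last digit $d_m$, only those whose leading digit $e_1$ matches the leading digit of the hypothetical successor $a(n)+d_m e_1$ actually yield a successor. Making this rigorous means counting base-$b$ expansions whose trailing digits forbid every one of the $b$ potential choices of $e_1$, and showing that this count has positive density. Combined with the growth estimate $a(n) \sim nb^2/2$ from \S\ref{SecCS}, a positive density of landmines in the range traversed by the sequence would force termination.

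For the structural side, I would exploit the periodicity developed in \S\ref{SecAP}. Within any stretch of the sequence on which the leading digit $d_1$ is fixed, the rule of thumb \eqref{Eq2.3} forces the last digit to advance by $d_1 \bmod b$ per step, so both the comma-numbers and the trajectory modulo $b^2$ are explicit arithmetic progressions. I would then compose these single-regime dynamics across transitions at which the leading digit increments, tracking the residue class at each crossing. The aim is to show that, for $b \ge 3$, an infinite trajectory imposes an increasingly rigid system of congruences that is eventually unsatisfiable; the base-$3$ case (Theorem~\ref{ThGR}) would serve as the model for this composition argument.

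The main obstacle is the purely deterministic nature of the dynamics. The model of \S\ref{SecMines} is a probabilistic heuristic and cannot by itself exclude an adversarial path that systematically dodges landmines. What is really needed is an integer-valued monovariant that strictly decreases along edges of $G_s$ yet is bounded below, and I see no natural candidate: the step sizes are small and positive, so the terms themselves only grow. Presumably this is why the conjecture remains open for $b \ge 4$ — the base-$3$ argument depends heavily on the very small alphabet, and for larger $b$ the case analysis at each leading-digit transition explodes combinatorially. A conceptually new invariant, perhaps a $p$-adic or radix-expansion statistic, seems to be required.
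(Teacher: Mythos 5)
There is no proof of this statement in the paper to compare against: Conjecture~\ref{ConjGs} is genuinely open there, proved only for $b=3$ (Theorem~\ref{ThGR}), and the remark following that theorem says exactly what you say --- that the base-$3$ transition-graph argument should in principle extend to larger $b$ but with rapidly escalating case analysis. Your structural proposal (fix the leading digit, use the periodicity of \S\ref{SecAP} to reduce each regime to an arithmetic progression of comma-numbers, and compose the residue data across leading-digit transitions) is precisely the paper's own template, and your diagnosis of why the probabilistic model of \S\ref{SecMines} cannot close the argument is also the authors' position. So as an assessment of the state of the problem your proposal is accurate and honest.

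One concrete error in the density half of your sketch: the landmines do \emph{not} have positive density, and no argument along those lines can work. By Theorem~\ref{Th1} there are exactly $b-2$ landmines among the $k$-digit numbers for each $k$, all concentrated in the final $b^2$ integers below $b^k$; hence the number of landmines in $[1,N]$ is $O(\log N)$ and their density is $O((\log N)/N)\to 0$. What actually makes termination plausible is not density in $[1,N]$ but the fact that the trajectory is forced to traverse the final $b^2$ squares below \emph{every} power of $b$ it passes, in steps of size at most $b^2-1$, so it faces a fresh bounded-probability ``crossing hazard'' infinitely often --- this is the kangaroo model of \S\ref{SecMines}, and it yields $D(b)/b^2$ per crossing, not a density statement. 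Even corrected, this remains a heuristic, as you rightly note; a deterministic path could in principle thread every crossing. For the record, the paper's bibliography cites a 2024 draft of Dougherty-Bliss and Ter-Saakov asserting finiteness in other bases, so the conjecture may since have been settled, but that is outside the paper under review.
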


For the rest of this section we concentrate on base $10$.
The paths that start at nodes
$1$ through $8$ have lengths listed in \eqref{Eq1.2} and end at the nodes listed in \eqref{Eq1.3}.

The nodes in $G_s$ with in-degree $0$ form the infinite sequence
\begin{align}\label{Eq4.1}
&1, 2, 3, 4, 5, 6, 7, 8, 9, 10, 13, 14, 15, 16, 17, 18, 19, 20, 21, 25, 26, 27,  \nonumber \\
&28, 29, 30, 31, 32, 37, 38, 39, 40, 41, 42, 43, 49, 50, 51, 52, 53, 54, 60,  \nonumber \\
&62, 63, 64, 65, 70, 74, 75, 76, 80, 86, 87, 90, 98, 200, 300, 400, 500, 600, \ldots
\end{align}
(Theorem~\ref{Th3}, \seqnum{A367600}), and those with in-degree $1$
are given in the complementary sequence
\begin{equation}\label{Eq4.2}
11, 12, 22, 23, 24, 33, 34, 35, 36, 44, 45, 46, 47, 48, 55, 56, 57, 58, 59, \ldots
\end{equation}
(\seqnum{A367340}).
The nodes in $G_s$ (or, equally, in $G_c$) with out-degree $0$ are
 \begin{align}\label{Eq4.3}
& 18, 27, 36, 45, 54, 63, 72, 81, 918, 927, 936, 945, 954, 963, 972, 981, 9918, 9927, \nonumber \\
& 9936, 9945, 9954, 9963, 9972, 9981, 99918,  99927, 99936, 99945, 99954, 99963, \nonumber \\
&   99972, 99981, 999918, 999927, 999936, 999945, 999954, 999963, 999972, \ldots
\end{align}
(Theorem~\ref{Th1}, \seqnum{A367341}), and those with out-degree $1$
are given in the complementary sequence
\begin{equation}\label{Eq4.4}
1, 2, 3, 4, 5, 6, 7, 8, 9, 10, 11, 12, 13, 14, 15, 16, 17, 19, 20, 21, 22, 23,  \ldots
\end{equation}
(\seqnum{A367615}).
Understanding the terms of sequence \eqref{Eq4.3}
and its generalization to other bases is the key to the whole paper. These are the  landmines,
and if a sequence reaches one of these nodes it ends there.
They play a central role in the following sections.
Note that there are just eight landmines with each of two, three, four, $\ldots$ digits.
They are rare, and  they only occur  in the final $100$ terms
between two powers of $10$.

The comma-successor to $n \ge 1$ (if it exists, or $-1$ if it doesn't) is
given by
\begin{equation}\label{Eq4.5}
12, 24, 36, 48, 61, 73, 85, 97, 100, 11, 23, 35, 47, 59, 72, 84, 96, -1, 110, \ldots\,,
\end{equation}
(\seqnum{A367338}).

There does not seem to be any better way of describing the graph $G_s$ other than by giving
the  lists shown in \eqref{Eq1.2}, \eqref{Eq1.3}, and \eqref{Eq4.1} to \eqref{Eq4.5}.
But  $G_c$ has a richer structure.

The nodes of the child graph $G_c$, like those of $G_s$,  have in-degree $0$ or $1$, only now
there are only finitely many of in-degree $0$. There are exactly $50$ of these,
and they are all one- or two-digit numbers:
\begin{align}\label{Eq4.6}
1, &2, 3, 4, 5, 6, 7, 8, 9, 10, 13, 14, 15, 16, 17, 18, 19, 20, 21, 25, 26, 27, 28, 29, 30, 31, 32,   \nonumber \\
 &37, 38, 39, 40, 41, 42, 43, 49, 50, 51, 52, 53, 54, 62, 63, 64, 65, 74, 75, 76, 86, 87, 98
\end{align}
(Theorem~\ref{Th4}, \seqnum{A367611}).  This means that $G_c$  consists of $50$ connected
components, each of which is a directed rooted tree. As we will see in \S\ref{SecB10}
the tree rooted at $20$ is infinite and the other $49$ are finite.

The nodes of in-degree $1$ form the complementary sequence to \seqnum{A367611}, \seqnum{A367612}.

The nodes of out-degree $0$ are the same as for $G_s$ (see \eqref{Eq4.6}).
As we saw  in \S\ref{SecCS}, a node in $G_c$ can have at most two children.
The nodes with out-degree $2$ are
\begin{equation}\label{Eq4.7}
14, 33, 52, 71, 118, 227, 336, 445, 554, 663, 772, 881, 1918, 2927, 3936, \ldots
\end{equation}
(Theorem~\ref{Th2}, \seqnum{A367346}). These are the
{\em branch-points}, the  nodes where the paths in the  child graph $G_c$ fork.
We will discuss them further in  \S\ref{SecB3H} and \S\ref{SecB10}.
When seeking an infinite path in $G_c$ we must
navigate through these branch-points so as to avoid the landmines.
The nodes of out-degree $1$ in $G_c$ comprise \seqnum{A367613}: these are the nodes not in  \seqnum{A367341} or \seqnum{A367346}.


\section{The successor and predecessor theorems}\label{SecCST}
By definition the comma-successor is independent of any earlier
terms in the sequence.
So we can try to gain insight by studying
 the sequence whose $n$th
term is the comma-successor to $n$ in the given base, or
 $-1$ if $n$ has no successor.
In base $10$ this comma-successor sequence
was mentioned in the previous section (see \eqref{Eq4.5}).
In base $2$ the comma-successor sequence is simply $4,3,6,5,8,7,10,9,\ldots$,
which is essentially \seqnum{A103889} (see \S\ref{SecB2}).
In \S\ref{SecB3} we give  an explicit but complicated formula for the base-$3$ comma numbers: see
Conjecture~\ref{ConjB3.1} and \eqref{EqB3.2}.
No analogs of Conjecture~\ref{ConjB3.1} are presently known for higher bases.


\subsection{Numbers without successors}\label{SubSecNS}

\begin{theorem}\label{Th1}
In base $b \ge 2$, the numbers
with no comma-successors (or, equally, with no comma-children)
are precisely the numbers whose representation in base $b$
has the form
\begin{equation}\label{EqCS}
b-1~ b-1\, \ldots\, b-1\, x\, y, {\mathrm{~~with~}} i \ge 0 {\mathrm{~digits~}} b-1,
\, 1 \le x \le b-2,  \, y = b-1-x.
\end{equation}
\end{theorem}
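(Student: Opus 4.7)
My approach is to analyze, for each positive integer $k$ with base-$b$ expansion $d_1 d_2 \ldots d_m$, exactly when there is a value $c \in \{1, \ldots, b-1\}$ satisfying $c = \delta(k + d_m b + c)$, since such $c$ parametrize the comma-children of $k$.

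First I would write $k = d_1 b^{m-1} + r$ with $0 \le r < b^{m-1}$, so that $k + d_m b + c = d_1 b^{m-1} + (r + d_m b + c)$. Since $d_m b + c \le b^2 - 1$, for $m \ge 3$ the quantity $r + d_m b + c$ is less than $2 b^{m-1}$, and the leading digit of the sum equals $d_1$ when $r + d_m b + c < b^{m-1}$, equals $d_1 + 1$ when $r + d_m b + c \ge b^{m-1}$ and $d_1 < b - 1$, and equals $1$ (with an extra digit appearing) when $r + d_m b + c \ge b^{m-1}$ and $d_1 = b - 1$. If $d_1 < b - 1$, then either $r + d_m b + d_1 < b^{m-1}$, making $c = d_1$ work, or else $r + d_m b + (d_1+1) \ge b^{m-1}$, making $c = d_1 + 1$ work; so no such $k$ is a landmine. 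If $d_1 = b - 1$, the only candidates are $c = b - 1$ (succeeds iff $r + d_m b \le b^{m-1} - b$) and $c = 1$ (succeeds iff $r + d_m b \ge b^{m-1} - 1$), and both fail exactly when $r + d_m b \in [b^{m-1} - b + 1,\, b^{m-1} - 2]$.

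Next I would translate that interval condition into digit form. The integers in $[b^{m-1} - b + 1,\, b^{m-1} - 2]$ are precisely the $(m-1)$-digit base-$b$ numbers with shape $\underbrace{(b-1)(b-1) \ldots (b-1)}_{m-2}\, t$ where $1 \le t \le b - 2$. Computing $r + d_m b$ in base $b$ (units digit $d_m$, next digit $(d_{m-1}+d_m) \bmod b$ with carry $\lfloor (d_{m-1}+d_m)/b \rfloor$, subsequent positions $d_{m-i}$ plus any carry) and noting that a carry out of position $1$ would require $d_{m-1} + d_m = 2b - 1$, which exceeds the maximum $2(b-1)$, I would conclude that the pattern forces $d_m = t \in [1, b-2]$, $d_{m-1} + d_m = b - 1$, and $d_2 = \ldots = d_{m-2} = b-1$. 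Together with $d_1 = b - 1$, this is precisely $k = (b-1)^{m-2}\, x\, y$ with $x = d_{m-1}$, $y = d_m$, $x + y = b - 1$, and $1 \le x \le b - 2$, i.e., \eqref{EqCS} with $i = m - 2 \ge 1$.

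Finally I would handle the small-length cases by direct inspection. For $m = 2$, write $k + d_2 b + c = (d_1 + d_2) b + (d_2 + c)$ and do a four-way case analysis on whether $d_1 + d_2$ and $d_2 + c$ exceed $b$; the only candidate values of $c$ turn out to be $d_1 + d_2$, $d_1 + d_2 + 1$, and $1$, and all three fail exactly when $d_1 + d_2 = b - 1$ with $1 \le d_1 \le b - 2$, which is \eqref{EqCS} with $i = 0$. For $m = 1$, one checks directly that $c = d_1$ works when $2 d_1 < b$, that $c = d_1 + 1$ works when $d_1 \le b - 2$ and $2 d_1 \ge b$, and that $c = 1$ works when $d_1 = b - 1$, so no single-digit number is a landmine. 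The main obstacle is the careful carry bookkeeping in the $d_1 = b - 1$, $m \ge 3$ case, but the observation that $d_{m-1} + d_m$ cannot reach $2b - 1$ rules out the alternative carry scenario immediately and keeps the argument clean.
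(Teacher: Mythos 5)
Your proof is correct, and while it rests on the same basic idea as the paper's --- a candidate child is determined by its leading digit $c$, which must satisfy $c=\delta(k+d_mb+c)$, so one enumerates the possible leading digits ($d_1$, $d_1+1$, or $1$ after overflow) and tracks carries --- your organization is genuinely different and in one place tighter. The paper proves the two inclusions separately, works ``to simplify the discussion'' in base $10$, and in its case (2a) reduces to $k=(b{-}1)\ldots(b{-}1)xy$ via the claim that if the prefix is not all $(b{-}1)$'s then adding a two-digit number cannot change the leading digit; as stated that claim fails (e.g.\ $8999+98=9097$ in base $10$), and the successor there is instead obtained with leading digit $f+1$. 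Your treatment of the $d_1<b-1$ case, where you show that one of $c=d_1$ or $c=d_1+1$ must succeed, closes exactly this gap. Your other main structural difference is the single if-and-only-if reduction, via $k=d_1b^{m-1}+r$, to the interval condition $r+d_mb\in[b^{m-1}-b+1,\;b^{m-1}-2]$, followed by decoding that interval as the digit pattern $(b{-}1)^{m-2}t$ and observing that $d_{m-1}+d_m\le 2b-2<2b-1$ forbids the alternative carry; this replaces the paper's pair of inequalities on the last two digits and yields the general-$b$ statement uniformly, including the correct degeneration at $b=2$ where the interval is empty. The cost is a slightly heavier setup; the benefit is that both directions of the theorem and all bases are handled at once, with the boundary cases $m=1,2$ isolated cleanly at the end.
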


\noindent{\bf Remarks.}
(i) The number \eqref{EqCS} is equal to
\begin{equation}\label{Eq3.3}
b^2 (b^i-1) + (b-1)(x+1)\,,
\end{equation}
where $i \ge 0$, $1 \le x \le b-2$.

(ii) In base $b=2$ no numbers satisfy \eqref{EqCS}, and indeed  as we will see in \S\ref{SecB2}, every
number has a successor. So for the following  proof we may assume $b \ge 3$.
For base $10$ the numbers were listed in \eqref{Eq4.3}.

\begin{proof}
We assume $b \ge 3$. Let $L$ denote the list
of numbers $k$ of the form \eqref{EqCS}. We will prove (1) that if $k \in L$ then $k$ has no
comma-successor, and (2) if a number $k$ has no comma-successor then $k \in L$.
It is easy to see that every single-digit number has a successor, so we will only consider
numbers with two or more digits.

To simplify the discussion we assume $b=10$. The proof for general $b$ is essentially the same,
after replacing $10$ by $b$, $9$ by $b-1$, and $8$ by $b-2$.

Part (1).  Consider  $k = 99\ldots 9 x y \in L$, with $i \ge 0$ $9$s, $1 \le x \le 8$,  and  $x+y=9$.
Suppose, seeking a contradiction, that $k$ has a comma-successor $k'$.
If the leading digit $\delta(k') = 9$, the comma-number is $y9$,
but then $xy + y\cdot 9 \ge 100$, implying $\delta(k') = 1$, a contradiction.
On the other hand, if $\delta(k')=1$, the comma-number is $y1$,
and now $xy + y1 < 100$, implying $\delta(k') = 9$, again a contradiction.

Part (2). Suppose $k$ has no comma-successor.
We consider  two sub-cases: (2a) $k$ has at least three digits,
and (2b) $k$ has two digits.

(2a) Suppose $k = fsxy$, where $f, x, y$ are single digits,
and $s$ is a number with $i \ge 0$ digits.
If $fs$ is not of the form $99\ldots9$, then adding a two-digit number to $k$
will not change its leading digit, and we can simply
take $k' = k+yf$ to be $k$'s successor. Since $k$ does not have a successor, we must
assume that $k = 99\ldots 9xy$. If that $k$ has a successor $k'$, let its leading digit be $g$,
so that $k' = 99\ldots9xy+yg$. If $g=9$, there was no overflow from the last two
digits, so $xy+y9 <100$, and then $k'$ would be a legitimate successor to $k$.
To exclude this, we must have $xy+y9\ge 100$, or, in other words,
\begin{equation}\label{Eq3.5}
10\cdot (x+y) + y + 9 \ge 100\,.
\end{equation}
On the other hand, if $g = 1$, then we could take
$k' = 99\ldots9xy + y1$ unless $xy+y1 < 100$.
So we also need
\begin{equation}\label{Eq3.6}
10\cdot (x+y) + y  \le 98\,.
\end{equation}
Equations \eqref{Eq3.5} and \eqref{Eq3.6} together
imply that $xy$ must be one of $18$, $27, \ldots 81$, as claimed.

(2b) It remains to consider the two-digit case $k = xy$ with $1 \le x \le 9$,
$0 \le y \le 9$. If $y=0$ it is easy to see that $k$ does have a successor,
so we assume $1 \le y \le 9$, and suppose that $k$ has a successor
\begin{equation}\label{Eq3.7}
k' = xy + yi, \mathrm{~with~} 1 \le i \le 9\,.
\end{equation}

If $x+y < 9$, $k'$ will certainly exist. To see this, we divide the range of values of $i$ into a
`low' region, defined by $1 \le i \le 9-y$, where  $\delta(xy+yi) = x+y$,
and a `high' region, defined by $10-y \le i \le 9$, where $\delta(xy+yi) = x+y+1$.
If $x+y$ is in the low-$i$ region we take $i=x+y$ to define $k'$,
and if $x+y+1$ is in the high-$i$ region (that is,
if $9-y \le x+y \le 8$) we take $i=x+y+1$.
The two regions cover all $x+y$ from $1$ to $8$, so $k'$ always exists.
(If $x+y = 9-y$, there are two solutions for $k'$. We will return to this in Theorem~\ref{Th2}.)

On the other hand, if $x+y > 9$,  when we form the sum in \eqref{Eq3.7},
$k'$ becomes a three-digit number with $\delta(k')=1$, and we can take $i=1$ to get a successor.

Finally, if $x+y=9$, there is no solution for $i$. In the low-$i$ region
$\delta(k') = 9$, but $1 \le i \le 9-y$ excludes $9$,
and in the high-$i$ region $\delta(k')=1$, but
$10-y \le i \le 9$ excludes $1$.
So $x+y=9$ is the only case where there is no successor. This completes the proof of (2b).
\end{proof}


\subsection{Numbers with two children}\label{SubSecTwin}

\begin{theorem}\label{Th2}
(a) The one- and two-digit numbers in base $b \ge 2$ with two
children are precisely the numbers
$wx_b = w\cdot b+x$ with $0 \le w \le b-3$,
$1 \le x \le \left\lfloor \frac{b-1}{2} \right\rfloor$
and $w = b-1-2x$.
(b) The numbers with  three or more digits and two children  are the numbers
\begin{equation}\label{Eq5.6}
d ~ b-1 ~ b-1 \, \ldots \, b-1 ~ d ~ b-1-d ~=~ d ~ (b-1)^i \, d~(b-1-d)\,,
\end{equation}
where $1 \le d \le b-2$ and  there are $i \ge 0$ digits $b-1$ following the initial $d$.
\end{theorem}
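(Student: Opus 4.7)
The plan is to reuse the successor-analysis framework of Theorem~\ref{Th1} and count, for each $k = d_1 d_2 \ldots d_m$, the number of $e \in \{1,\ldots,b-1\}$ satisfying $\delta(k + d_m b + e) = e$. Writing $k = A \cdot b^2 + B$ with $B = d_{m-1} b + d_m$, an overflow into $A$ occurs exactly when $e \ge e^* := b^2 - (d_{m-1}+d_m)\,b - d_m$. Below $e^*$ the leading digit of $k + d_m b + e$ equals $d_1$, and at or above $e^*$ it equals $\delta(A+1)$. So $e \mapsto \delta(k+d_m b+e)$ is a two-step function, and $k$ has exactly two children iff both values are hit by a valid $e$; equivalently $\delta(A+1) \ne d_1$, $d_1 < e^*$, and $\delta(A+1) \ge e^*$.

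For part~(a), I would handle the one- and two-digit case as a direct restatement of sub-case (2b) in the proof of Theorem~\ref{Th1}. Setting $k = w \cdot b + x$ (allowing $w = 0$ to cover the one-digit sub-case), the low-region candidate is $e = w + x$ and the high-region candidate is $e = w + x + 1$; both are valid simultaneously exactly when $w + 2x = b - 1$ together with $x \ge 1$ and $w + x + 1 \le b - 1$. These rearrange to $w = b - 1 - 2x$ with $1 \le x \le \lfloor (b-1)/2 \rfloor$ (and automatically $0 \le w \le b - 3$), matching the theorem's statement; the degenerate $w = 0$ only occurs when $b$ is odd, giving $k = (b-1)/2$.

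For part~(b), the crux is describing $\delta(A+1)$. Since $A$ has digits $d_1 d_2 \ldots d_{m-2}$, adding $1$ changes the leading digit only when the carry propagates all the way from position $m - 2$ to position $1$; this occurs iff $d_2 = d_3 = \cdots = d_{m-2} = b-1$ and $d_1 \le b-2$, in which case $\delta(A+1) = d_1 + 1$. In every other sub-case $\delta(A+1) = d_1$ (forcing at most one child), apart from the boundary sub-case $d_1 = b-1$ with $A$ all $b-1$'s, where $\delta(A+1) = 1$ but the required inequalities $b - 1 < e^* \le 1$ are vacuous for $b \ge 2$. So the two-child condition collapses to $e^* = d_1 + 1$, i.e.\
\begin{equation*}
(d_{m-1} + d_m)\,b + (d_m + d_1) = b^2 - 1,
\end{equation*}
and since $d_m + d_1 \le 2b - 2$, the unique digit decomposition is $d_{m-1} + d_m = b - 1$ and $d_m + d_1 = b - 1$. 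This gives $d_{m-1} = d_1 =: d$ and $d_m = b - 1 - d$ with $1 \le d \le b - 2$, i.e.\ the form $d\,(b-1)^i\,d\,(b-1-d)$ with $i = m - 3$. The converse is a direct addition: the comma-numbers $(b-1-d)d$ and $(b-1-d)(d+1)$ produce the children $d\,(b-1)^{i+2}$ and $(d+1)\,0^{i+2}$ respectively. The main obstacle I anticipate is the carry bookkeeping: correctly identifying that it is precisely the middle block $d_2, \ldots, d_{m-2}$ (and not $d_{m-1}$ or $d_1$) whose uniform value $b - 1$ governs the jump in leading digit, and disposing of the top-overflow boundary case cleanly before the digit equation above can be trusted.
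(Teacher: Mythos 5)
Your argument is correct and follows essentially the same route as the paper's proof of part (b): your threshold condition $e^* = d_1+1$, i.e.\ $(d_{m-1}+d_m)\,b + (d_m+d_1) = b^2-1$, is exactly the paper's key equation \eqref{Eq5.8}, and you extract the digit values from it in the same way. The differences are only presentational --- you spell out the carry-propagation step that the paper merely asserts (``the only way this can happen is if $s=(b-1)^i$\dots''), and you supply a proof of part (a), which the paper omits as straightforward.
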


\noindent{\bf Remarks.}  In base $b=2$ the conditions are vacuous and no numbers have two children.
In bases $b = 3$, $4$,  $5$, and $6$ these numbers are, written in base $b$,
with a semicolon between the (a) and (b) terms:
\begin{align}\label{Eq5.7}
3: ~& 1; 111, 1211,12211, 122211, 1222211, \dots \nonumber \\
4: ~& 11; 112, 221, 1312, 2321, 13312, 23321, \ldots \nonumber \\
5: ~& 2, 21; 113, 222, 331, 1413, 2422, 3431, \ldots \nonumber  \\
6: ~& 12, 31; 114, 223, 332, 441, 1514, 2523, \ldots \nonumber \\
\end{align}
For base $10$ see \eqref{Eq4.7}.
\begin{proof}
We omit the straightforward proof of (a). (b) If $k$ is of the form \eqref{Eq5.6}
then its two children are $d~ (b-1)^{i+2}$ and $(d+1)~0^{i+2}$.
Conversely, suppose $k$ has two children, $k'$ and $k''$,  and at least three digits,
say $k = fswx$, where $f,w,x$ are single digits and $s$ has $i \ge 0$ digits.
Suppose the children are $k' = fswx + xy'$ and $k'' = fswx + xy''$.
The leading digits $y', y''$ of $k', k''$ are distinct, so we can assume
$1 \le y' < y'' \le b-1$.

The only way this can happen is if $s = (b-1)^i$ for $i \ge 0$,
$y'=f$, and $y'' = f+1$. This implies $1 \le f \le b-2$.
Also we have $(w+x)\cdot b+x+f < b^2$ and $(w+x)\cdot b +x+f+1 \ge b^2$,
implying that
\begin{equation}\label{Eq5.8}
(w+x)\cdot b + x + f ~=~b^2-1\,.
\end{equation}
 But $1 \le x+f \le 2b-3$, so \eqref{Eq5.8} implies $w+x \le b-1$.
 However, $w+x \le b-2$ and $b \ge 2$ lead to a contradiction in \eqref{Eq5.8},
 so $w+x=b-1$, and then  \eqref{Eq5.8} implies $x+f= b-1$, so $f=w$.
 But $x=0, f=b-1$ and $x=b-1, f=0$ are impossible, so $1 \le x \le b-2$.
 We conclude that $fswx$ has the form specified in \eqref{Eq5.6}.
 \end{proof}

\subsection{Numbers without predecessors}\label{SubSecNPD}
In this subsection we discuss the numbers that are not
the comma-successor of any number, that is,
the nodes with in-degree $0$ in $G_s$ (Theorem~\ref{Th3}),
and those that are not the comma-child of any number, that is,
those with in-degree $0$ in $G_c$ (Theorem~\ref{Th4}).

The following useful lemma constructs the parent of a child, if it exists. We omit the elementary proof.

\begin{lemma}\label{Lemma1}
In base $b \ge 2$, if $n$ is a comma-child of $k$, then
\begin{equation}\label{Eq5.L}
k = n - x\cdot b - f,  \text{~where~} f = \delta(n) \text{~and~} x = (n-f) \bmod{b}.
\end{equation}
If $n - x\cdot b - f$ is negative, then $n$ is not a comma-child.
\end{lemma}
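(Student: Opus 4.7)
The plan is to unwind the definition of comma-child directly and recover $k$ from $n$ by a two-step identification: first pin down the leading digit of the child, then pin down the last digit of the parent modulo $b$. By the definition in \S\ref{SecCS}, if $n = e_1 e_2 \ldots e_p$ is a comma-child of $k = d_1 d_2 \ldots d_m$, then
\[
n - k ~=~ d_m e_1 ~=~ d_m \cdot b + e_1.
\]
The leading digit $e_1$ of $n$ is by definition $\delta(n) = f$, which handles the ``$f$'' in the claimed formula. This is immediate and requires no work.

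Next I would determine the last digit $d_m$ of the parent $k$. Reducing the relation $n = k + d_m \cdot b + f$ modulo $b$ and using $d_m \cdot b \equiv 0 \pmod b$, I get $n \equiv k + f \pmod b$, so $k \equiv n - f \pmod b$. Since $d_m$ is precisely the residue of $k$ modulo $b$ (i.e., the last base-$b$ digit of $k$), this forces
\[
d_m ~=~ (n - f) \bmod b ~=~ x.
\]
Substituting back yields $k = n - x\cdot b - f$, which is the displayed formula~\eqref{Eq5.L}.

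Finally, for the negativity clause, comma sequences consist of positive integers (all terms are $\ge 1$), so any parent $k$ must be positive; in particular non-negative. If the expression $n - x\cdot b - f$ computed from the recipe above is negative, then no admissible parent exists, and so $n$ cannot be a comma-child of anything. I do not anticipate any real obstacle here: the only step that requires even a moment's thought is the modular identification of $d_m$, and that is a one-line congruence. The lemma is essentially a bookkeeping observation, which is presumably why the authors omit the proof; my write-up would be three or four lines at most.
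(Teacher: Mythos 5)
Your proof is correct and is essentially the elementary argument the paper omits: the congruence $k \equiv n - f \pmod{b}$ identifying $d_m$ is the same observation the authors record in \S\ref{SecCS} (there phrased as $d_m + e_1 \equiv e_p \pmod{b}$), and the negativity clause follows because the formula determines the unique candidate parent. Nothing further is needed.
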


\begin{theorem}\label{Th3}
In base $b \ge 3$, the only numbers $n \ge b^2-1$ that are not
comma-successors are the numbers
\begin{equation}\label{Eq5.9}
c \cdot b^i \text{~with~} i \ge 2 \text{~and~} 2 \le c \le b-1.
\end{equation}
\end{theorem}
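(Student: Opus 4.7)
The plan is to combine Lemma~\ref{Lemma1} with Theorem~\ref{Th2}. Lemma~\ref{Lemma1} says every $n$ has a unique candidate parent $k = n - xb - f$ (with $f = \delta(n)$ and $x = (n-f)\bmod b$), and $n$ is a comma-child iff $k > 0$. Among comma-children, $n$ fails to be a comma-successor precisely when its parent $k$ has a second, necessarily smaller, comma-child; by Theorem~\ref{Th2} these two-child parents fall into two explicit families. So the task reduces to identifying which $n \ge b^2-1$ arise as the \emph{larger} child of a two-child parent.

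First I would verify that for $n \ge b^2-1$ the candidate $k$ is strictly positive, so $n$ is always a comma-child. Since $xb + f \le (b-1)b + (b-1) = b^2-1$, we have $k \ge 0$; the boundary $k = 0$ would force $(x,f) = (b-1, b-1)$ and $n = b^2 - 1$, but then the formula gives $x = (n - f) \bmod b = (b^2 - b) \bmod b = 0 \ne b-1$, a contradiction.

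Next I would use Theorem~\ref{Th2} to enumerate the two-child parents and compute the larger child in each family. In the small case~(a), a direct calculation on $k = wx_b$ with $w = b-1-2x$ gives children $(b-1-x)\cdot b + (b-1)$ and $(b-x)\cdot b$; the larger is at most $(b-1)b = b^2 - b$, which is strictly less than $b^2 - 1$, so no small parent contributes to our range. In case~(b), the proof of Theorem~\ref{Th2} already identifies the two children of $k = d\,(b-1)^i\,d\,(b-1-d)$ as $d\,(b-1)^{i+2}$ and $(d+1)\,0^{i+2}$, the larger being $(d+1) \cdot b^{i+2}$. Setting $c = d+1 \in \{2,\ldots,b-1\}$ and $j = i+2 \ge 2$ yields exactly the numbers in \eqref{Eq5.9}.

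For the converse, given $n = c \cdot b^j$ with $c \in \{2,\ldots,b-1\}$ and $j \ge 2$, Lemma~\ref{Lemma1} produces the candidate parent $k = (c-1)\,(b-1)^{j-2}\,(c-1)\,(b-c)$, which by Theorem~\ref{Th2}(b) has two children with $n$ the larger, so $n$ is not the comma-successor of $k$. The main obstacle I foresee is only bookkeeping: one must not conflate ``not a comma-child'' (in-degree $0$ in $G_c$) with ``not a comma-successor'' (in-degree $0$ in $G_s$), and the boundary case $n = b^2-1$ needs a moment's care since the size bound $k \ge 0$ is tight there. Beyond that, the argument is purely algebraic and driven entirely by the classification in Theorem~\ref{Th2}.
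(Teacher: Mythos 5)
Your proof is correct. The paper's own argument also begins from the unique-parent formula of Lemma~\ref{Lemma1}, but at the crucial point it merely asserts that it is ``easy to verify'' that for $n$ not of the form \eqref{Eq5.9} the parent $k$ has no smaller child, and for the exceptional $n = c\cdot b^i$ it exhibits the smaller child $n-1$ by hand. You organize the same skeleton differently: after checking that every $n \ge b^2-1$ has a positive parent (your treatment of the tight boundary case $n=b^2-1$ is right, since there $x=0$ rather than $b-1$), you reduce ``not a comma-successor'' to ``larger child of a two-child parent'' and then read the answer off the classification in Theorem~\ref{Th2}. Family (a) contributes only larger children $(b-x)\cdot b \le b^2-b < b^2-1$, which fall outside the range, while family (b) contributes exactly the larger children $(d+1)\cdot b^{i+2}$ with $1\le d\le b-2$ and $i\ge 0$, i.e., precisely \eqref{Eq5.9}; your identification of the parent of $c\cdot b^j$ as $(c-1)\,(b-1)^{j-2}\,(c-1)\,(b-c)$ agrees with the paper's closed form $c\cdot b^i - b^2 + c(b-1)$. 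What your route buys is that it replaces the paper's unproved ``no smaller child'' verification with a citation to an already-established classification, making the forward direction genuinely complete; the modest cost is a dependence on Theorem~\ref{Th2} (including its part (a), whose proof the paper itself omits). One phrase to tighten: ``its parent $k$ has a second, necessarily smaller, comma-child'' is only accurate because you immediately restrict to $n$ being the \emph{larger} child, as your next sentence does.
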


\begin{proof}
If $n \ge b^2-1$ is not of the form  \eqref{Eq5.9}
then it is easy to verify that first, $n$
is  a child of $k = n - f -x \cdot b$ where $f = \delta(n)$
and $x = (n-f) \pmod{b}$, and second, that $k$ has no smaller child than $n$,
implying that $n$ is the successor to $k$.
(ii) On the other hand, if $n = c \cdot b^i$ ($i \ge 2, 2 \le c \le b-1$),
then although $n$ is a child of
$$
k = c \cdot b^i - b^2 + c \cdot (b-1) = (c \cdot b^{i-2} - 1) \cdot b^2 + (c-1) \cdot b + (b-c)\,,
$$
$k$ also has a smaller child, $n-1$, and so $n$ is not the successor to $k$.
\end{proof}

 \noindent{\bf Remark.}
In base $2$, the only numbers without predecessors are $1$ and $2$.
 In base $10$ the full list of numbers that are not successors is given in \eqref{Eq4.1}: there
 are $54$ terms less than $99$.  For a general base $b \ge 3$, it appears that there
 are exactly $(b^2+b-2)/2$ numbers less than $b^2-1$ that are not successors.
 These numbers appear to depend in a complicated way on the value of $b$, and we
 have not attempted to classify them.


 \begin{theorem}\label{Th4}
In base $b \ge 2$, the only numbers $n$ that are not comma-children are in
the range $1 \le n \le b^2-1$.
\end{theorem}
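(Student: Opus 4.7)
The plan is to establish the contrapositive: every $n \ge b^2$ is the comma-child of some positive integer. Lemma~\ref{Lemma1} has already done the conceptual work, producing the only candidate parent, namely $k = n - xb - f$, where $f = \delta(n)$ is the leading digit of $n$ and $x = (n-f) \bmod b$. So the task reduces to checking that this particular $k$ is legitimate.

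The first step is the positivity estimate. Since $f$ and $x$ are digits in base $b$, the quantity $xb + f$ is bounded above by $(b-1)b + (b-1) = b^2 - 1$. Therefore $k = n - xb - f \ge n - (b^2 - 1) \ge 1$ whenever $n \ge b^2$. This single inequality is the whole heart of the argument and is essentially the only thing requiring verification.

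The second step is to check that the pair $(k,n)$ really fits the definition of the comma relation in \S\ref{SecCS}. The last digit of $k$ equals $k \bmod b = (n-f) \bmod b = x$, which matches the required $d_m$; the leading digit of $n$ is $f$ by construction; the difference $n - k = xb + f$ is then the comma-number attached to the comma between $k$ and $n$. This number lies in the allowed range \eqref{Eq2.2} and is not one of the forbidden multiples $b, 2b, \ldots, (b-1)b$, because $f = \delta(n) \ge 1$. Hence $n$ is a comma-child of $k$.

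Combining the two steps, every $n \ge b^2$ is a comma-child, so the set of numbers that fail to be comma-children is contained in $\{1, 2, \ldots, b^2 - 1\}$, which is the claim. The only potential obstacle is purely bookkeeping — making sure the forbidden multiples of $b$ are excluded by the nonvanishing of the leading digit $f$ — and no case analysis on $n$ is needed beyond the uniform bound $xb + f \le b^2 - 1$.
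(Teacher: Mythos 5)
Your proposal is correct and follows exactly the paper's own route: the paper's proof is the one-line observation that for $n \ge b^2$ the candidate parent $k = n - xb - f$ from Lemma~\ref{Lemma1} is positive. You simply spell out the positivity bound $xb+f \le b^2-1$ and the verification that $(k,n)$ satisfies the comma relation, both of which the paper leaves implicit.
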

\begin{proof}
If $n \ge b^2$ then $k$ given by \eqref{Eq5.L} is positive, and so $n$ has a smaller parent.
\end{proof}
For base $10$ these are the $50$ numbers listed in \eqref{Eq4.6}.


\subsection{The ancestors of n}\label{SubSecRoot}

In base $10$, the sequence $\{R_s(n), n \ge 1\}$ giving the most remote ancestor of $n$ in the
graph $G_s$ is
\begin{align}\label{EqRs}
& 1, 2, 3, 4, 5, 6, 7, 8, 9, 10, 10, 1, 13, 14, 15, 16, 17, 18, 19, 20, 21, 20, \nonumber \\
& 10, 2, 25, 26, 27, 28, 29, 30, 31, 32, 30, 21, 1, 3, 37, 38, 39, 40, 41, 42, \nonumber \\
& 43, 40, 31, 20, 13, 4, 49, 50, 51, 52, 53, 54, 50, 41, 32, 10, 14, 60, 5, 62, \ldots \nonumber \\
\end{align}
(\seqnum{A367366}).
From \eqref{Eq1.2} we know this sequence contains
$2137453$ $1$s, $194697747222394$ $2$s,  $2$ $3$s, and so on.
As mentioned in \S\ref{SecGraphs}, we
 believe that for any base $b > 2$ every path in $G_s$ is finite.

For base $10$, the  sequence $\{R_c(n), n \ge 1\}$  (\seqnum{A367617}) giving the most remote ancestor
of $n$ in the  graph $G_c$ is the same as \eqref{EqRs} for the first $59$ terms.
The first difference is at $n=60$, where because $14$ has two children, $59$ and
$60$, we have $R_c(60) = 14$ whereas $R_s(60) = 60$.


\subsection{The child graph contains infinitely-long paths}\label{SubSecKon}

The major difference between the successor graph $G_s$  and the child graph
$G_c$ is in the following theorem.


 \begin{theorem}\label{ThKon}
 For any base $b \ge 2$, the child graph $G_c$ contains
 an infinite path.
\end{theorem}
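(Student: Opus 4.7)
The plan is to deduce the theorem as a clean application of K\"{o}nig's Infinity Lemma, once three structural facts from earlier sections are collected.

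First, I would use Theorem~\ref{Th4} to note that every number $n \ge b^2$ is a comma-child, so the set $R$ of in-degree-$0$ nodes of $G_c$ is contained in $\{1, 2, \ldots, b^2-1\}$ and is therefore finite. Combined with the fact established in \S\ref{SecCS} that each node has in-degree at most $1$, the parent-pointer in $G_c$ is a well-defined partial function on the positive integers. Since by \eqref{Eq2.1}--\eqref{Eq2.2} every comma-child is strictly greater than its parent, iterating the parent map from any $n$ strictly decreases the value and hence must terminate at some root $r \in R$. Consequently $G_c$ decomposes as a disjoint union of finitely many rooted trees $\{T_r : r \in R\}$.

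Next, since the positive integers are partitioned among the trees $T_r$ and $R$ is finite, the pigeonhole principle forces at least one tree $T_{r_0}$ to contain infinitely many vertices. By the out-degree bound proved in \S\ref{SecCS} (a number has at most two comma-children), $T_{r_0}$ is locally finite: every vertex has at most two successors in the tree. K\"{o}nig's Infinity Lemma then guarantees an infinite path in $T_{r_0}$ starting at $r_0$, which is the desired infinite path in $G_c$.

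I do not anticipate any serious obstacle: the substantive content is carried by Theorem~\ref{Th4} and the earlier bookkeeping about in-degrees, out-degrees, and strict monotonicity in \S\ref{SecCS}, while the rest is a textbook invocation of K\"{o}nig's lemma. The only conceptual subtlety, which the authors explicitly flag in the introduction, is that this argument is non-constructive: it asserts the existence of the correct sequence of choices at the branch-points classified in Theorem~\ref{Th2}, but offers no recipe for identifying them.
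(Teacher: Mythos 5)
Your argument is correct and is essentially the paper's own proof: Theorem~\ref{Th4} confines the roots to $\{1,\ldots,b^2-1\}$, the unique-parent property makes $G_c$ a finite union of locally finite rooted trees, pigeonhole selects an infinite tree, and K\"{o}nig's Infinity Lemma yields the infinite path. You have merely spelled out the bookkeeping (strict monotonicity, bounded out-degree) that the paper leaves implicit.
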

\begin{proof}
We know from Theorem~\ref{Th4}  that every node
in $G_c$ belongs to a tree with root-node less than $b^2$.
By the Infinite Pigeonhole Principle one of these trees contains infinitely many nodes,
and by K\H{o}nig's Infinity Lemma \cite[p. 2]{CST}, \cite[p. 233]{ZAC} that tree contains an infinite branch.
\end{proof}

We will say more about these infinite paths in Sections~\ref{SecB3} and \ref{SecB10}.


\section{Predicting comma-numbers; the algorithm}\label{SecAP}

In this section we explain how we are able to compute comma sequences for the huge numbers
of terms shown in \eqref{Eq1.2}.
The reason is that the comma-numbers are extremely regular.
As mentioned at the end of \S\ref{SecCS}, if the leading digit does
not change,  there is a simple formula \eqref{Eq2.3} for the comma-number
and the next term. This causes the comma-numbers to form
simple periodic sequences for long stretches.

As an illustration, consider the original comma sequence \seqnum{A121805}, \eqref{Eq1}.
The leading digit changes as we go from $a(1942) = 99987$
to $a(1943)=100058$,
and stays constant at $1$ until the next
change, which is from $a(4114) = 199959$
to $a(4115)=200051$.
In that long stretch of $2171$ consecutive terms all beginning with $1$, the comma sequence
consists of  $217$ repetitions of the ten-term arithmetic progression 
$\{81, 91, 1, 11, 21, 31, 41, 51, 61, 71\}$, plus one additional copy
of $81$.\footnote{This data can be found in
OEIS entries \seqnum{A121805} and \seqnum{A366487}.} The sum of the ten terms
is $460$, so from $a(1943)$ to $a(4114)$ the sequence increased by $460\cdot 217 + 81 = 99901$,
reaching $a(4115)=200051$.
This calculation replaced  $2171$ individual calculations.

To see why these arithmetic progressions occur, suppose we know a term
$a(n) = k$ in a base-$b$ comma sequence. To avoid irregularities at the start
of the sequence, suppose $k = fswx_b$, where $f, w,x$ are single base-$b$ digits and $s$
has $i \ge 0$ digits. As long as
\begin{equation}\label{Eq6.1}
k \le (f+1)\cdot(b^{i+2}-b^2)\,,
\end{equation}
the leading digit of the next term $a(n+1) = k'$
will also be equal to $f$,
and so $k' = k + x\cdot b + f$.
This means that the comma sequence from $k$
onwards satisfies the recurrence
\begin{align}\label{Eq6.2}
k(0) & = k\,, \nonumber \\
k(j+1) & = k(j) + (k(j) \bmod{b})\cdot b + f \text{~for~} j \ge 0\,,
\end{align}
as long as \eqref{Eq6.1} continues to hold for $k(j+1)$.
Now $k(j) \bmod{b}= (x+j\cdot f) \bmod{b}$, so we obtain
\begin{equation}\label{Eq6.3}
k(n) ~=~ k ~+~ \sum_{j=o}^{n-1} ((x+j\cdot f) \bmod{b}) + n\cdot f\,,
\end{equation}
for $n \ge 0$.
The right-hand side of  \eqref{Eq6.3}  only depends on the value of $n \bmod{b}$,
so by taking $n=mb$ we can jump ahead $mb$ steps
in the sequence, obtaining
\begin{equation}\label{Eq6.4}
k(m\cdot b) ~=~ k ~+~ m \cdot \left(b \cdot f + \sum_{j=0}^{b-1} ((x+j\cdot f) \bmod{b})\right)\,,
\end{equation}
provided $k(m\cdot b)$ satisfies \eqref{Eq6.1}.

Equation~\eqref{Eq6.4} is the basis for our computer
algorithm, which uses it to skip over long stretches of the sequence
whenever possible. Implementations in various computer languages
may be found in \seqnum{A121805}.


\section{Chance of hitting a landmine}\label{SecMines}
In this section we give two random models for the comma sequences, which
partially explain why the huge fluctuations in \eqref{Eq1.2}  are not surprising.

Let us fix the base $b \ge 2$.
From a distance, a comma sequence may look like a kangaroo moving along a very long path in small jumps, each jump being of length between 1 and $b^2-1$. There are landmines on the path,
and if the kangaroo lands on one, its journey ends.  The path has infinitely many squares, numbered $1, 2, 3, \ldots$. The landmines are concentrated in bunches. On the squares numbered
with $k$-digit base-$b$ numbers,
that is, the squares in the range $b^{k-1}$ to $b^k-1$,
the mines are concentrated in the final $b^2$ squares, and in that range there
are $b-2$ mines (Theorem~\ref{Th1}).

So our first, naive, model says that the chance of the kangaroo
landing on a landmine is $(b-2)/b^2$, which is (correctly) zero if $b=2$ and is $8/100$
in base $10$. Naively, then, we could say that in base $10$ the chance
of not  getting from one power of $10$ to the next is $8/100$,  or $1/12.5$,
and so the expected life of the kangaroo
is about $10^{12.5}$. This is much higher
than what we have observed, and certainly this model is too crude.
For one thing, the kangaroo does not pick a square at random from
the final $b^2$ squares, it progresses through them in jumps of
average size $b^2/2$.

For a more realistic model, we  start a kangaroo (in base $10$)
at one of the $100$ squares   $99\ldots9xy =  9^m xy$,
with $m \ge 2$ and $0 \le x \le 9$, $0 \le y \le 9$,
and follow it,  to see if it gets  to safety at or beyond
$9^{m+1}00$. It turns out that $88$ succeed (independent of
the choice of $m$), so the chance of hitting a landmine is $12/100$,
which suggests an expected life of $10^{100/12} = 10^{8.33}$.
This is consistent with the data in \eqref{Eq1.2} and its $10000$-term extension in \seqnum{A330128}.

We repeated this computation for all bases $b$  from $2$ to $100$,
taking the kangaroo's starting point to be in the range $(b-1)^m00$ to $(b-1)^{m+2}$, where $m \ge 2$.
The number, $D(b)$, say,  that died in base $b \ge 2$ is given by the $b$th term of
the sequence
\begin{equation}\label{Eq7.1}
0, 1, 2, 4, 5, 7, 8, 11, 12, 14, 16, 18, 20, 23, 24, 26, 29, 31, 33, 36, 38, 40, 42,  \ldots
\end{equation}
This sequence was not in \cite{OEIS} (although it is now \seqnum{A368364}),
but to our surprise its first differences  appeared to
agree\footnote{Apart from minor differences in the first two terms.}  with \seqnum{A136107},
which has several equivalent definitions,
one of which is that it  is the number of ways to write
a number as the difference of two triangular numbers. It also has
 an explicit generating function.

There is no obvious connection between the two
problems, but an hundred-term agreement is unlikely to be a coincidence,
so we state what it implies about \eqref{Eq7.1}  as a conjecture.\footnote{January 20 2024: Robert Dougherty-Bliss and Natalya Ter-Saakov \cite{RDB} report  that they have proved this conjecture.}

\begin{conj}
\label{Darien}
The number $D(b)$  of comma sequences in base $b \ge 2$ that start in the range
$(b-1)^m00$ to $(b-1)^{m+2}$, $m \ge 2$, but do not reach $b0^{m+2}$,
is given by the coefficient of $t^b$ in the expansion of
\begin{equation}\label{Eq7.2}
\frac{1}{1-t} \left( \sum_{b=1}^{\infty}  \frac{t^{b(b+3)/2}}{(1-t^b)} - t^2 \right)
~=~ t^3 + 2t^4 + 4 t^5 + 5 t^6 + 7t^7 + \ldots
\end{equation}
\end{conj}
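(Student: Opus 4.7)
The plan is to translate the conjecture into an explicit count by pinning down the kangaroo's dynamics inside a single decade $[b^{m+2} - b^2,\, b^{m+2})$ and then verifying the generating function identity by a direct summation.

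\emph{Step 1: dynamics.} First I would show that for $m \ge 2$ the long block of leading $b-1$'s forces any carry out of the last two digits to cascade past $b^{m+2}$, so the kangaroo's state is determined entirely by the last two base-$b$ digits $(X, Y)$ and the transitions are independent of $m$. A case-by-case carry analysis based on \eqref{Eq2.2} should yield exactly three regimes: (i) if $Y = 0$, then $(X, 0) \mapsto (X, b - 1)$; (ii) if $Y \ge 1$ and $X + Y \le b - 2$, then $(X, Y) \mapsto (X + Y + 1,\, Y - 1)$; (iii) otherwise the trajectory either sits on a landmine --- by Theorem~\ref{Th1} exactly the states $(x,\, b - 1 - x)$ with $1 \le x \le b - 2$ --- or escapes past $b^{m+2}$ within at most two steps.

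\emph{Step 2: predecessor chains.} Next, because rule~(ii) is injective with inverse $(x, y) \mapsto (x - y - 2,\, y + 1)$, the dying starting positions decompose into $b - 2$ disjoint chains, one rooted at each landmine $(x,\, b - 1 - x)$. Iterating the predecessor map from $(x_0, y_0) = (x,\, b - 1 - x)$ and inducting on $i$ give the closed form
\begin{equation*}
    x_i \;=\; (i + 1)\,x \,-\, i\,b \,-\, \binom{i + 1}{2}, \qquad y_i \;=\; b - 1 - x + i,
\end{equation*}
and a chain extends from step $i$ to step $i + 1$ precisely when $x_{i+1} \ge 0$ and $y_i \le b - 3$. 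The inequality $x_x < 0$ forced by $x \le b - 2$ shows that the $y$-condition is never the binding one, so
\begin{equation*}
    \ell_x \;=\; \max\bigl\{\, L \ge 1 \,:\, L\,x \ge (L - 1)\,b + \tbinom{L}{2} \,\bigr\}.
\end{equation*}

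\emph{Step 3: summation and generating function.} Finally, interchanging summation in $D(b) = \sum_{x = 1}^{b - 2} \ell_x = \sum_{L \ge 1} \#\{x \in [1, b - 2] : \ell_x \ge L\}$ and substituting $j = b - 2 - x$ reduce the condition $\ell_x \ge L$ to $L\,j \le b - L(L + 3)/2$. The $L = 1$ piece contributes $b - 2$; for $L \ge 2$, multiplying by $t^b$ and using the routine identity $\sum_{n \ge 0} (\lfloor n/L \rfloor + 1)\, t^n = 1/\bigl((1 - t)(1 - t^L)\bigr)$ yields the contribution $t^{L(L + 3)/2}/\bigl((1 - t)(1 - t^L)\bigr)$. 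Pulling out the common factor $1/(1 - t)$ and rewriting the $L = 1$ term $t^3/(1 - t)^2$ as the $-\,t^2/(1 - t)$ correction to $t^2/(1 - t)^2$ reproduces exactly the right-hand side of \eqref{Eq7.2}.

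The hard part will be Step~1: confirming that the carry analysis is genuinely uniform in $m \ge 2$ and, in particular, that the borderline state $(0,\, b - 1)$ --- which satisfies $X + Y = b - 1$ but escapes in one step to $b^{m+2}$ rather than being a landmine --- does not spawn additional dying chains, and that no predecessor of a chain state is itself terminal. Once the transition rules are pinned down, the predecessor tree is a mechanical induction and the generating function manipulation is purely algebraic.
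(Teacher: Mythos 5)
The first thing to say is that the paper does \emph{not} prove this statement: it is presented purely as a conjecture, supported only by the empirical observation that the first differences of the computed values $D(2),\ldots,D(100)$ match OEIS entry A136107 (number of odd divisors minus the triangular-number indicator), whose known generating function is the right-hand side of \eqref{Eq7.2}; a footnote records that Dougherty-Bliss and Ter-Saakov later supplied a proof in a separate manuscript. Your proposal therefore attempts something the paper leaves open, and having checked the key steps I believe it succeeds. The Step 1 transition rules are correct: for a term $(b-1)^m XY$ the successor's leading digit can only be $b-1$ (no carry out of the last two digits) or $1$ (carry, hence escape past $b^{m+2}$), which yields exactly your rules (i) and (ii), and every state with $Y\ge 1$ and $X+Y\ge b-1$ is either a landmine ($1\le X\le b-2$, $X+Y=b-1$) or escapes in one step. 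The worry you flag resolves favorably: every state $(X,b-1)$ escapes, so rule (i) never feeds a dying orbit, landmines have $y\le b-2$ and hence no rule-(i) predecessor, and your computation $x_x<0$ shows the backward chain dies before its $y$-coordinate could reach $b-1$; thus the dying set is exactly the disjoint union of the $b-2$ pure rule-(ii) chains (disjoint because forward images are unique). Your formulas $x_i=(i+1)x-ib-\binom{i+1}{2}$, $y_i=b-1-x+i$ check out, the condition $Lx\ge (L-1)b+\binom{L}{2}$ is decreasing in $L$ (difference $x-b-(L-1)<0$), which legitimizes the level-set interchange, and $\sum_{x=1}^{b-2}\ell_x$ reproduces $0,1,2,4,5,7,8,11,12$ for $b=2,\ldots,10$, matching \eqref{Eq7.1} and the count of $12$ deaths in base $10$. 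The generating-function algebra, including the $-t^2/(1-t)$ correction coming from the constraint $x\ge 1$ being binding only for $L=1$, is routine and correct. What your route buys over the paper's is an actual explanation of the ``coincidence'': the summand $t^{L(L+3)/2}/\bigl((1-t)(1-t^L)\bigr)$ counts landmines whose predecessor chain has length at least $L$, which is precisely what ties $D(b)$ to differences of triangular numbers. To make this a complete proof you need only write out the one-step carry analysis behind (i)--(iii) (it is uniform in $m\ge 1$) and record the disjointness remark above; nothing essential is missing.
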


Supposing Conjecture~\ref{Darien} to be correct,
we can estimate the coefficients $D(b)$ for large $b$.
We are grateful to V\'{a}clav Kot\v{e}\v{s}ovec (personal communication)
for the following analysis.

$D(b)$ is essentially the $b$th partial sum of \seqnum{A136107},
and from the OEIS entry for
that sequence we see that $\seqnum{A136107}(b) = \alpha(b) - \beta(b)$,
where $\alpha(b) = \seqnum{A001227}(b)$ is the number of odd divisors of $b$,
and $\beta(b) = \seqnum{A010054}(b)$ is $1$ if $b$ is a triangular number, and
is otherwise $0$. The contribution to the partial sum from $\beta(b)$
is $O(\sqrt{b})$ and can be ignored.
$\alpha(b)$ has Dirichlet generating function $\zeta(s)^2 (1-\frac{1}{2^s})$,
so from Perron's formula \cite[p.\ 245]{Apostol}, \cite[p.\ 217]{TEN} the partial sum
$\alpha(1)+\ldots+\alpha(b)$ is, for large $b$,  asymptotic to
the residue at $s=1$ of
\begin{equation}\label{Eq7.3}
\zeta(s)^2 \left(1 -\frac{1}{2^s}\right) \frac{b^s}{s}\,.
\end{equation}
Computer algebra systems such as Maple and Mathematica can compute this residue,
which after simplification is 
\begin{equation}\label{Eq7.4}
b \left( \frac{\log (2b)}{2} + \gamma - \frac{1}{2} \right)\,,
\end{equation}
where $\gamma$ is the Euler-Mascheroni constant.
Not surprisingly, this expression \eqref{Eq7.4} for the partial sums of the
number-of-odd-divisors function (with $b$ changed to $n$) resembles the classical formula
\begin{equation}\label{Eq7.5}
\tau(1) + \ldots + \tau(n) ~=~ n\,(\log n + 2 \gamma -1) ~+~ O(\sqrt{n})
\end{equation}
for the partial sums of the number of divisors function $\tau(n)$
\cite[Theorem~320]{HW},  \cite[\S3.1, Eq. (2)]{Apostol}, \cite[\S3.2, Th.\ 3.2]{TEN}.

For our purpose it is enough to use the leading term of \eqref{Eq7.4},
which gives $D(b) \sim  \frac{1}{2} b(\log 2b)$,
so the chance that the kangaroo  does not reach
the next power of $b$ is $D(b)/b^2 \sim  \frac{\log 2b}{2b}$.
The expected length of the comma sequence (assuming Conjecture~\ref{Darien}) is therefore
$\sim b^{2b/\log 2b} ~\sim~ e^{2b}$
as $b \to \infty$. For $b=10$ this is $10^{8.69}$, reasonably close
to the value $10^{8.33}$ obtained above.


\section{Base 2}\label{SecB2}
Base $2$ is exceptional: there are just two comma sequences, and both are infinite.
If we start at $1$ the sequence (in binary) is
$1, 100, 101,1000, 1001, 1100, 1101,10000, 10001, \ldots$
or $1,4,5,8,9,12,13,16,17, \ldots$
if written in decimal (essentially \seqnum{A042948}, the numbers $4k$ and $4k+1$).
If we start at $2$ the sequence is
$10, 11, 110, 111, 1010, 1011,1110, 1111, \ldots$
or $2,3,6,7,10,$ $11,$ $14,$ $15,\ldots$
in decimal (\seqnum{A042964}, the numbers  $4k+2$ and $4k+3$).
 Every number belongs to one of these two sequences.


\section{Base 3 and the finiteness of comma sequences}\label{SecB3}
For bases  greater than $2$ we cannot come close to a full analysis.
But for base $3$, at least, we can prove
that all comma sequences are finite, and we believe we
have an explicit formula for the comma-numbers $cn(n)$.

\begin{theorem}\label{ThGR}
In base $3$ the successor graph $G_s$ does not contain
an infinite path.
\end{theorem}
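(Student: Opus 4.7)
The plan hinges on Theorem~\ref{Th1}, which in base $3$ identifies the landmines as $4 = 11_3$ together with $3^m - 5$ for all $m \geq 3$ (the latter all having leading digit $2$). An infinite path in $G_s$ would have to traverse every stretch $[2 \cdot 3^{m-1}, 3^m)$ of ``leading digit $2$'' values, for all sufficiently large $m$, while missing the unique landmine $3^m - 5$ on each visit. The strategy is to show that a finite-state analysis of residues forbids this.

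First I will apply the recurrences of Section~\ref{SecAP} with $b = 3$ and $f = 2$: each step adds $3v + 2$, where $v$ is the current last digit; the last digit cycles through $0, 2, 1$ with successive step sizes $2, 8, 5$; and nine consecutive steps advance the value by exactly $45$. The induced permutation on $\Z/45\Z$ partitions it into five orbits of size $9$, while $3^m - 5 \bmod 45$ cycles through $\{4, 22, 31, 13\}$ with period $4$ in $m$; these four residues lie in four distinct orbits. Hence, for $m$ large enough that the landmine is reachable before the end of the stretch, the landmine at scale $m$ is hit \emph{if and only if} the entry value $e_m \in [2 \cdot 3^{m-1}, 2 \cdot 3^{m-1} + 8]$ of the sequence into the stretch lands in the orbit of $3^m - 5 \bmod 45$.

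Next I will study the deterministic scale-transition map $\Phi$ acting on the finite set of states $(e_m \bmod N, m \bmod 4)$ for a suitable modulus $N \geq 45$, which records how an entry residue transforms after traversing the remainder of the scale-$m$ ``$f=2$'' stretch and the whole scale-$(m{+}1)$ ``$f=1$'' stretch. Since $\Phi$ is a self-map of a finite set, every orbit is eventually periodic, and the theorem reduces to the purely combinatorial claim that no $\Phi$-cycle is \emph{safe}, i.e., that every cycle contains at least one state in which $e_m \bmod 45$ lies in the landmine orbit for the current residue $m \bmod 4$. I expect this last step to be the main obstacle: a bounded but nontrivial verification on a small directed graph of residue transitions, which must exploit the way $\Phi$ shuffles the five orbits of $\Z/45\Z$ as it steps through the four values of $m \bmod 4$. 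The authors' remark that finiteness can be established only for base $3$ (and not for $b \geq 4$) is consistent with this finite check being tractable here while becoming infeasible in larger bases; some auxiliary lemmas (perhaps guided by the conjectural closed form in Conjecture~\ref{ConjB3.1}) may also be needed to reduce the case analysis to manageable size.
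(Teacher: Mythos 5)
Your plan is essentially the paper's own proof: the authors likewise exploit the periodicity of the base-$3$ comma-numbers (steps $1,4,7$ for leading digit $1$ and $2,8,5$ for leading digit $2$, with $3^k \bmod 12$ of period $2$ and $3^k \bmod 15$ of period $4$) to reduce the theorem to a finite transition graph on the entry points $3^{4h+s}+t$, $t\in\{0,2,3,6\}$, into each power-of-$3$ stretch, and then check that every path in that graph reaches a landmine $3^m-5$. The finite verification you defer as the ``main obstacle'' is exactly the case-by-case analysis the paper carries out (its transition table and accompanying figure), and it succeeds: the transition graph contains no landmine-free cycle, so your reduction closes.
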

\begin{proof}
We will prove that a comma sequence starting at any $x \ge 1$ always hits a
landmine, which, in base $3$,
has the form $22\dots211$, i.e., $3^h-5$. For simplicity,  in the
following discussion
we assume that $x$ has at least four digits (so $x\ge 3^3$),
since the smaller cases can be resolved computationally.

In general, a sequence starting at $3^{k-1}\le x < 3^{k}$  will either
hit the landmine $3^{k}-5$ or contain a term $y$ in the range $3^{k}\le
y \le 3^{k}+7$,
since comma-numbers do not exceed $3^2-1=8$.

Considering the eight  ways a sequence can approach the boundary $3^{k}$ from
below, we can further observe that the sequence starting at $x$, if it
does not terminate earlier, will hit one of the numbers $3^k+u$ with
$u\in\{0, 2, 3, 6\}$.

Again, a sequence containing  $3^k+u$
will either end at $3^{k+1}-5$ or continue to term $3^{k+1}+v$, with
$u,v\in\{0,2,3,6\}$.
By making  the relation between $k$, $u$, and $v$ explicit, we will prove
that all sequences end.

We have already seen in \S\ref{SecAP} that comma-numbers are
periodic in nature.
For base $3$, the following table shows the  comma-numbers corresponding to the
terms of a sequence
starting at $w\cdot 3^k +z$, with $k\ge 3$, $w=1,2$, and $z=0,1,2$,
  as long as the first digit of the terms remains unchanged:

\begin{center}\begin{tabular}{l|l}
start & comma-numbers\\\hline\\[-10pt]
$3^k$   & 1, 4, 7, 1, 4, 7, 1, 4, 7,\dots\\
$3^k+1$ & 4, 7, 1, 4, 7, 1, 4, 7, 1,\dots\\
$3^k+2$ & 7, 1, 4, 7, 1, 4, 7, 1, 4,\dots\\
$2\cdot 3^k$ & 2, 8, 5, 2, 8, 5, 2, 8, 5,\dots\\
$2\cdot 3^k+1$ & 5, 2, 8, 5, 2, 8, 5, 2, 8,\dots\\
$2\cdot 3^k+2$ & 8, 5, 2, 8, 5, 2, 8, 5, 2,\dots
\end{tabular}
\end{center}

So, in general, as long as the leading digit of the terms remains the
same, every three terms a sequence increases by $1+4+7=12$ (leading
digit 1) or by $2+8+5=15$ (leading digit 2).

For $k>2$, we observe that $3^k \pmod{12}$ has a period of length 2, and
$3^k \pmod{15}$ has a period of length 4. After a straightforward but
tedious case-by-case analysis, taking into account the temporary disruption in the
pattern when the leading digit of the terms changes, we obtain the
following table, which  summarizes the evolution of a sequence between two
powers of 3:

\begin{center}
\begin{tabular}{l|l}
from & arrives at \\\hline\\[-10pt]
$3^{4h}$ & $3^{4h+1}+2$\\
$3^{4h}+2$ & \bfseries ends \\
$3^{4h}+3$ & $3^{4h+1}+0$\\
$3^{4h}+6$ & $3^{4h+1}+6$\\ \\

$3^{4h+1}$ & \bfseries ends\\
$3^{4h+1}+2$ & $3^{4h+2}+3$  \\
$3^{4h+1}+3$ & $3^{4h+2}+2$\\
$3^{4h+1}+6$ & $3^{4h+2}+0$\\
\end{tabular}\qquad%
\begin{tabular}{l|l}
from & arrives at \\\hline\\[-10pt]
$3^{4h+2}$ & \bfseries ends \\
$3^{4h+2}+2$ & $3^{4h+3}+6$ \\
$3^{4h+2}+3$ & $3^{4h+3}+2$\\
$3^{4h+2}+6$ & $3^{4h+3}+3$\\ \\

$3^{4h+3}$ & $3^{4h+4}$\\
$3^{4h+3}+2$ & \bfseries ends \\
$3^{4h+3}+3$ & $3^{4h+4}+3$\\
$3^{4h+3}+6$ & $3^{4h+4}+6$\\
\end{tabular}
\end{center}

The corresponding transition graph is shown in Figure \ref{Fig2}, and
 makes it clear that all sequences terminate, and that there is
an upper bound on the number of powers of 3 a sequence starting at $x$ can
pass before ending. This completes the proof.
\end{proof}

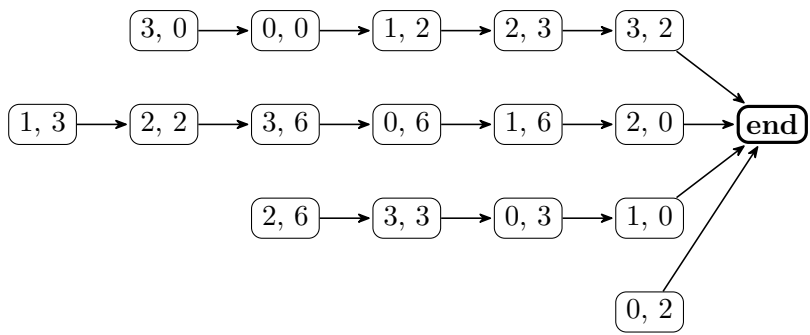
\begin{figure}[!ht]
\begin{tikzpicture}[node distance=0.7cm,scale=1]
\tikzstyle{every node}=[draw,rectangle,rounded corners,inner
sep=3pt,minimum width=9mm]
\node[very thick] (end) {\bfseries end};
\node (n20) [left=of end] {2, 0};
\node (n16) [left=of n20] {1, 6};
\node (n06) [left=of n16] {0, 6};
\node (n36) [left=of n06] {3, 6};
\node (n22) [left=of n36] {2, 2};
\node (n13) [left=of n22] {1, 3};
\node (n32) [above=of n20] {3, 2};
\node (n23) [left=of n32] {2, 3};
\node (n12) [left=of n23] {1, 2};
\node (n00) [left=of n12] {0, 0};
\node (n30) [left=of n00] {3, 0};
\node (n10) [below=of n20] {1, 0};
\node (n03) [left=of n10] {0, 3};
\node (n33) [left=of n03] {3, 3};
\node (n26) [left=of n33] {2, 6};
\node (n02) [below=of n10] {0, 2};
\path[->,>={Stealth[round]},shorten >=1pt,semithick]%
(n02) edge (end)
(n26) edge (n33) (n33) edge (n03) (n03) edge (n10) (n10) edge (end)
(n13) edge (n22) (n22) edge (n36) (n36) edge (n06) (n06) edge (n16)
(n16) edge (n20) (n20) edge (end)
(n30) edge (n00) (n00) edge (n12) (n12) edge (n23) (n23) edge (n32)
(n32) edge (end);
\end{tikzpicture}
\caption{Transitions between numbers near powers of $3$.
A node labeled $(s,t)$ represents numbers of the form
$3^{4k+s}+t$, and ``end'' represents the landmines at  $3^k-5$.
An edge from $(s,t)$ to $(w,z)$ with $w = (s+1 \bmod 4)$ means that a sequence
containing the term $3^{4k+s}+t$ also contains the term
$3^{4k+s+1}+z$.}
\label{Fig2}\centering
\end{figure}
For example, consider a sequence starting at $x$, with $3^{12}\le x<
3^{13}$.
It either stops at $3^{13}-5$ or reaches one of the numbers $3^{13}+\{0,
2, 3, 6\}$.

If it reaches $3^{13}+3$, since $13\equiv1 \pmod{4}$, according to
Figure \ref{Fig2}, it will also contain the terms $3^{14}+2$,
$3^{15}+6$, $3^{16}+6$, $3^{17}+6$, $3^{18}$, and will end at $3^{19}-5$.

\vspace*{+.1in}

\noindent{\bf Remark.}
In principle it should be possible to use similar arguments to prove
that the successor graph in any base $b \ge 3$ does not contain an infinite path,
although the details will become increasingly complicated, and may require computer assistance.

\vspace*{+.1in}


The values of the comma number $cn(n)$  for
with $n \ge 1$ (writing both $n$ and $cn(n)$ in ternary, but using
$-1$ to indicate that no comma-successor exists) are:
 $$
   \begin{array}{rrrrrrrrrrrrrr}\label{ttt}
   n: & 1 & 2 & 10 & 11 & 12 & 20 & 21 & 22 & 100 & 101 & 102 & 110 & \ldots \\
   cn(n): & 11 & 21 & 1 & -1 & 21 & 2 & 11 & 21 & 1 & 11 & 22 & 1   & \ldots
   \end{array}
$$
(\seqnum{A367609}).
Examination of a much larger table suggests the following, which
 is a more precise version of the rule-of-thumb in \eqref{Eq2.3}.
Although we are confident this is correct, we state it as a conjecture, since we will not give a formal proof.

\begin{conj}\label{ConjB3.1}  In base $b=3$,
the comma-number $cn(n)$ associated with $n = d_1d_2 \ldots d_m$
is $d_m d_1$ with the
exceptions shown in Table~\ref{Tab1}.

\begin{table}[!ht]
  \caption{For these values of $n$ the true comma-number $cn(n)$  and the
  correction to be added to $d_md_1$ are:}\label{Tab1}
$$
\begin{array}{|c|c|c|}
\hline
n & \mbox{cn(n)} & \mbox{correction } \\
\hline
1 2^i 1~~ (i \ge 1)               & 12  & +1 \\
1 2^i j 2 ~~(i \ge 0, j=0,1,2) & 22 & +1 \\
2 \text{~or~} 22                   & 21  & +1 \text{~or~} -1 \\
2^i 1 ~~(i \ge 0)                   & 11   & -1 \\
2^i j 2 ~~(i \ge 0, j=0,1,2)    & 21   & -1 \\
2^i 11~~ (i \ge 0)                 & \text{(does~not~exist)} & \text{(none)}  \\
\hline
\end{array}
$$
\end{table}
\end{conj}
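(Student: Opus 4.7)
The plan is to prove Conjecture~\ref{ConjB3.1} by direct case analysis, stratified by the leading digit $d_1$ of $n$ and the position of $n$ within the range $[3^{m-1},\,3^m)$. As noted in Section~\ref{SecCS}, the only possible base-$3$ comma-numbers are $d_m1 = 3d_m+1$ (when the successor has leading digit $1$) and $d_m2 = 3d_m+2$ (leading digit $2$). Since the comma-successor is the smallest valid candidate and $d_m1 < d_m2$, I would first test whether $n+d_m1$ has leading digit $1$, and only then fall back on $n+d_m2$ with leading digit $2$.

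In the default case, if $d_1=1$ and $n+d_m1 < 2\cdot 3^{m-1}$, then the sum still lies in $[3^{m-1},\,2\cdot 3^{m-1})$ and has leading digit $1$, giving $cn(n)=d_m1 = d_md_1$; and if $d_1=2$ and $n+d_m2 < 3^m$, then $n+d_m1$ is invalid (its leading digit is $2$, not matching $e_1=1$), so the smallest valid candidate is $d_m2=d_md_1$. To isolate the boundary regions where the default fails, in Case $d_1=1$ I would enumerate the interval $n \in [2\cdot 3^{m-1}-(3d_m+1),\,2\cdot 3^{m-1}-1]$ intersected with the condition that $n$'s last digit is $d_m$. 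For $d_m=0$ no such $n$ exists; for $d_m=1$ one obtains the single value $n=2\cdot 3^{m-1}-2 = 1\,2^{m-2}\,1$, matching row~1 of Table~\ref{Tab1} with $i=m-2\ge 1$; and for $d_m=2$ one obtains (when $m\ge 3$) the three numbers $\{1\,2^{m-3}\,j\,2 : j=0,1,2\}$, matching row~2. In each exception one checks that $n+d_m2 \in [2\cdot 3^{m-1},\,3^m)$, so $cn(n)=d_m2 = d_md_1+1$. In Case $d_1=2$ an analogous enumeration of the interval $n\in[3^m-(3d_m+2),\,3^m-1]$ splits into the sub-interval $[3^m-(3d_m+1),\,3^m-1]$, where $cn(n)=d_m1 = d_md_1 - 1$ produces exactly the patterns $2^i\,1$ (row~4) and $2^i\,j\,2$ (row~5), and the single residual value $n=3^m-(3d_m+2)$, which forces $d_m=1$ from the congruence $n\equiv 1\pmod 3$ and hence equals the landmine $3^m-5 = 2^{m-2}\,1\,1$ of row~6, already identified in the proof of Theorem~\ref{ThGR}.

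The small-$m$ edge cases ($m=1,2$) must be handled separately, since the intervals above can become degenerate or overlap: the landmine at $n=4=11_3$ is the $i=0$ instance of row~6, and the two-digit entries $n=2_3$ and $n=22_3$ of row~3 behave anomalously because adding a two-digit comma-number to a one- or two-digit $n$ may change the digit count. The main obstacle is the careful bookkeeping needed to confirm that each boundary condition corresponds to exactly one row of Table~\ref{Tab1} with the stated correction; a minor subtlety is interpreting the notation of row~5 (the exponent $i$ should evidently be taken $\ge 1$, since $i=0$ with $j=1$ would describe the default $n=12_3$) and the ``$+1$ or $-1$'' entry of row~3. Once these correspondences are fixed, the remaining verifications reduce to elementary base-$3$ arithmetic on the values of $n+d_m1$ and $n+d_m2$ relative to $2\cdot 3^{m-1}$ and $3^m$.
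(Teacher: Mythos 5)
The paper does not prove this statement: it is deliberately left as a conjecture, with the authors writing that they ``will not give a formal proof'' and offering only empirical evidence from a table of values. So there is no proof in the paper to compare against; your proposal is an actual argument where the paper has none, and it is sound. Your key reductions are correct: in base $3$ the only candidates are $d_m1=3d_m+1$ (successor leading digit $1$) and $d_m2=3d_m+2$ (leading digit $2$), tried in that order; the default $cn(n)=d_md_1$ holds away from the boundaries; and the exceptional windows are exactly $n\in[2\cdot 3^{m-1}-(3d_m+1),\,2\cdot 3^{m-1}-1]$ when $d_1=1$ and $n\in[3^m-(3d_m+2),\,3^m-1]$ when $d_1=2$, intersected with $n\equiv d_m\pmod 3$. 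I checked the resulting enumerations: for $d_1=1$ they give the single value $2\cdot 3^{m-1}-2=1\,2^{m-2}\,1$ (row 1) and the triple $2\cdot 3^{m-1}-\{1,4,7\}=1\,2^{m-3}\,j\,2$ (row 2); for $d_1=2$ they give $3^m-2=2^{m-1}1$ (row 4), $3^m-\{1,4,7\}=2^{m-2}\,j\,2$ (row 5), and the residual $3^m-5$ forced by the congruence, which is the landmine of row 6 and of Theorem~\ref{Th1}. Your handling of the $m\le 2$ degeneracies is also the right one, since for two-digit $n$ the sum can overflow into three digits and re-validate $d_m1$ (e.g., $cn(12_3)=21_3$ is the default, not an exception).

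Two small points of bookkeeping, consistent with the caveats you already raise. First, the table as printed is slightly loose, and a completed proof would have to restate it: row 4 with $i=0$ gives $n=1$, for which $cn(1)=11_3=d_md_1$ and the correction is $0$, not $-1$ (so row 4, like row 5, really wants $i\ge 1$, with $n=1$ and the two-digit anomalies absorbed into the separately listed small cases); and for row 3 direct computation gives $cn(2)=cn(22_3)=21_3$ with correction $-1$ in both cases relative to $d_md_1=22_3$, so the ``$+1$'' there appears to be an artifact of the table rather than something your argument needs to produce. Second, it is worth adding one sentence confirming that the branch-points $1\,2^i\,1\,1_3$ of Theorem~\ref{Th2} do not create further exceptions, since there the smaller of the two children already realizes $d_md_1$; this follows from your interval analysis but deserves explicit mention. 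With those clarifications, your case analysis upgrades the conjecture to a theorem.
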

 The first row of the table, for example, implies that
if $n = 1221$, for which $d_md_1 = 11$, the true comma-number is $12$, so
we must add $1$ to $d_md_1$.

The conjecture implies that any comma sequence in base $3$ satisfies the recurrence
\begin{equation}\label{EqB3.2}
a(n+1) = a(n) + (d_md_1)_3\,,
\end{equation}
where $a(n) = d_1 \ldots d_m$,
except that a correction must be added to $(d_m d_1)_3$ (or the sequence must be terminated),
when $a(n)$ is one of the  exceptions listed in Table~\ref{Tab1}.
This is a long way from Fibonacci's recurrence,
but---see the opening sentence---that was only to be expected.


\section{The unique infinite path in the base-3 child graph}\label{SecB3H}
We know from
Theorem~\ref{ThKon} that we {\em can} avoid all the landmines if we are permitted
to  pick different children at the branch-points. In this section we give a simple construction
for an infinite path in the base $3$ case, and show it is essentially unique.

 \begin{figure}[!htb]
 \centerline{\includegraphics[angle=0, width=4in]{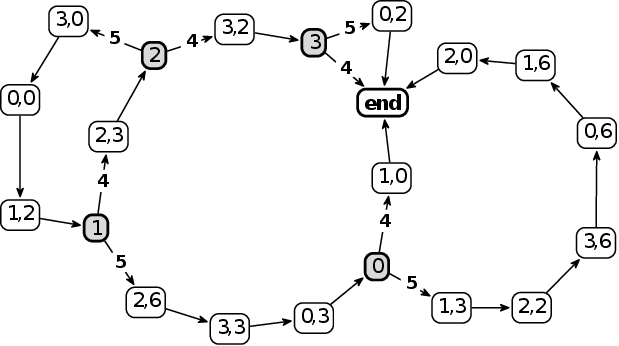}}
 \caption{The graph in Fig.~\ref{Fig2} enlarged to include the branch-points, which are the shaded nodes.
 The unique infinite path follows the loop on the left of the graph.}
 \label{Fig3}
 \end{figure}

We know from Theorem~\ref{Th2} (see also \eqref{Eq5.7}) that the nodes in $G_c$
where there are branch-points are  $1$ and the  nodes
\begin{equation}\label{Eq8.1}
1 2^i 11_3 = 2\cdot 3^{i+2} - 5 \text{ for } i \ge 0\,.
\end{equation}
The nodes that have no successors are $2^i 11_3 = 3^{i+2}-5$ for $i \ge 0$ (Theorem~\ref{Th1}).

\begin{thm}\label{Stairway3}
There is a unique infinite path in the base-$3$ child graph $G_c$ that starts at $1$.
A rule for constructing it is that when branch-points are reached, the path should alternately choose the
lower and the higher alternatives.
\end{thm}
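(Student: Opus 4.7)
The plan is to combine Theorem~\ref{ThGR} with the branch-point classification of Theorem~\ref{Th2}, working inside the enlarged transition graph of Figure~\ref{Fig3}. Because $G_s$ contains no infinite path (Theorem~\ref{ThGR}), any infinite path in $G_c$ starting at $1$ must take the older (non-successor) child at a branch-point infinitely often. By Theorem~\ref{Th2} the branch-points in base $3$ are $1$ itself and the numbers $1 2^i 1 1_3 = 2 \cdot 3^{i+2}-5$ for $i \ge 0$, each sitting just below a power of $3$; at such a branch-point the two children are $1 2^{i+2}_3 = 2\cdot 3^{i+2}-1$ (the successor, still with leading digit $1$) and $2\,0^{i+2}_3 = 2\cdot 3^{i+2}$ (the older sibling, now with leading digit $2$).

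First I would determine, for each of the two children of a branch-point, which residue state $(s,t)$ from the proof of Theorem~\ref{ThGR} it falls into, where $s$ is the relevant exponent of $3$ taken modulo~$4$ and $t \in \{0,2,3,6\}$ is the offset above the power. Using the periodic recurrence of \S\ref{SecAP}, I would then verify that between consecutive branch-points the path is governed entirely by $G_s$, so the transition table in the proof of Theorem~\ref{ThGR} applies verbatim. Inserting these new branch-point edges into Figure~\ref{Fig2} yields Figure~\ref{Fig3}.

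Next I would carry out a finite case-check on Figure~\ref{Fig3}: every state off one small cycle routes, via $G_s$-successors alone, to the ``end'' node, so the only infinite trajectories are those that traverse this cycle. A short inspection shows that consecutive traversals force strict alternation between using the ``lower child'' edge and the ``higher child'' edge at the branch-points encountered. Since the branch-points occur at strictly increasing values $2\cdot 3^{i+2}-5$, and the branch-point $1$ fixes the starting phase of the alternation, the rule determines a unique choice at every branch-point. This gives both existence and uniqueness of the infinite path from $1$, and simultaneously identifies the explicit construction.

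The main obstacle is the first step: correctly placing each branch-point child into its $(s,t)$-state, and checking that no intermediate landmine is encountered between the branch-point choice and the first state appearing in the transition table. This is entirely computational, but must be done carefully, since a misidentification would change the connectivity of Figure~\ref{Fig3}. Once the enlarged transition graph is confirmed, both uniqueness of the path and validity of the alternating rule follow by tracing a small finite graph.
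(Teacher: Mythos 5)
Your proposal follows essentially the same route as the paper: build the enlarged transition graph of Figure~\ref{Fig3} by adding the branch-points of Theorem~\ref{Th2} to the residue-state graph from the proof of Theorem~\ref{ThGR}, observe that the unique cycle avoiding the ``end'' node forces one ``lower'' and one ``higher'' choice per traversal (hence the alternation), and conclude uniqueness because every infinite path must eventually enter that cycle. The details you flag as needing care (placing each branch-point child into its $(s,t)$-state and checking the intervening $G_s$-segments) are exactly the finite verification the paper carries out when tracing the loop from node $(2,3)$ back to itself.
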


\begin{proof}
We use the machinery developed in the previous section.
Figure~\ref{Fig3} shows  the graph in Fig.~\ref{Fig2} enlarged
to include the branch-points. For $h \in \{0,1,2,3\}$, the branch points
of the form $2 \cdot 3^{4k+h}-5$ for some $k$ are represented by the shaded node
labeled $h$ in Fig.~\ref{Fig3}. The two outgoing edges from the shaded nodes
are labeled with the comma numbers, which are always $4$ and $5$.
The infinite path in the graph repeats the loop on the
left of Fig.~\ref{Fig3}.  Considered purely as a graph, the infinite path is clearly unique,
since all other loops contain the  ``end'' node , and it is also clear that the infinite path alternately chooses $4$ and $5$ at the branch-points.

Suppose a sequence contains the term
$3^{4 \cdot 3 + 2}+3$, represented by the node $(2,3)$.
The sequence will then continue to the branch-point
 $t=2\cdot 3^{4\cdot3+2}-5$.
From there, choosing the higher term $t+5$, the sequence will reach $3^{4\cdot3+3}$, i.e., node (3,0).
The sequence proceeds to the terms $3^{4\cdot4}$ (node $(0,0)$),
$3^{4\cdot 4+1}+2$ (node $(1,2)$),
and then to the branch-point $2\cdot3^{4\cdot4+1}-5$ (the shaded node $1$). Following the lower branch marked with `4', we arrive at $3^{4\cdot 4+2}+3$, which is of the same form as our starting point, and the process repeats.

Since completing the loop the exponent increases by only $4$, we can conclude that there is only one infinite path: every infinite path must contain a term of the form $3^{4k+2}+3$, but once a path includes this term, it will include all subsequent terms of the same form.
\end{proof}

The infinite sequence, starting at $1$, begins
\begin{equation}\label{Eq9.3}
1, 5, 12, 13, 18, 20, 27, 28, 32, 39, 40, 44, 51, 52, 57, 59, 67, 72, 74, 81, \ldots\,.
\end{equation}
(\seqnum{A367621}).
The branch points actually encountered  are $1$, $111$, $1 2^311$, $12^411$, $12^711$, $1 2^811, \ldots$,
alternating nodes $1 2^{4j-1}11$ (where we take the lower branch)
 and $12^{4j}11$ (where we take the higher branch).

The beginning of the path through $G_c$ is shown in Figure~\ref{Fig4}.
The thick arrows indicate the correct paths to take out of the branch-points, while
the long thin arrows are bad choices that lead to landmines  where those
paths end.  Short vertical arrows denote long strings of edges without branches.

\begin{figure}[!ht]
\centerline{\includegraphics[width=5.25in]{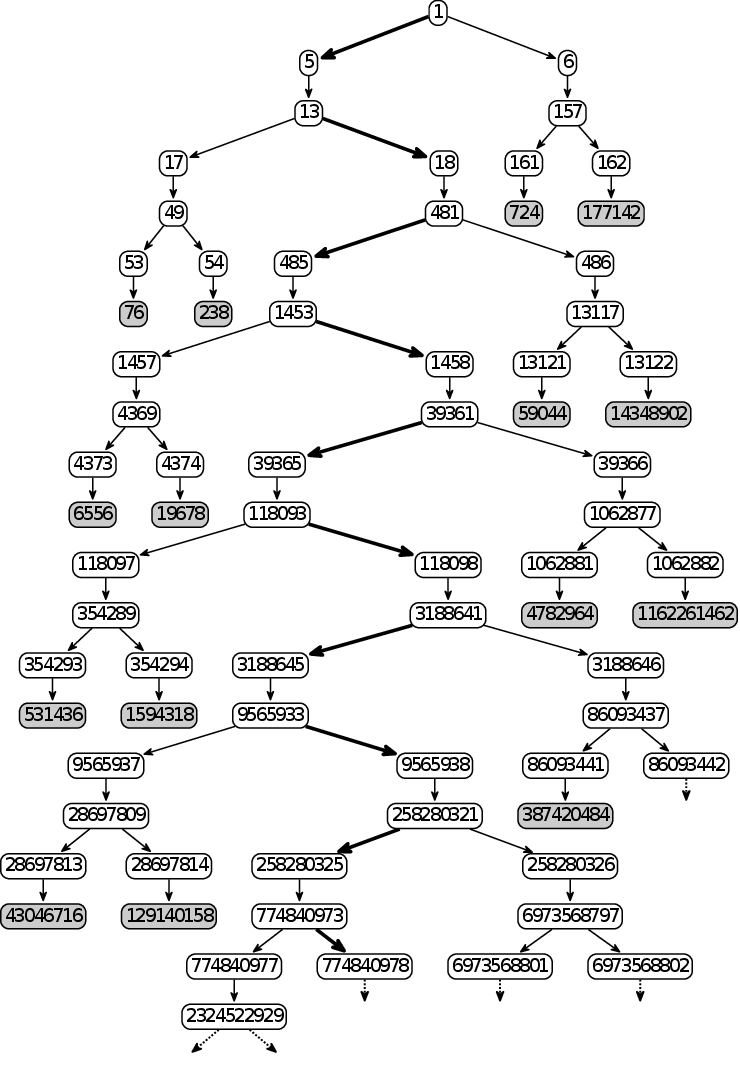}}
\caption{A portion of the base-$3$ child graph $G_c$.
The thick lines are the  start of the unique infinite path  (\seqnum{A367621}).
The shaded nodes are the landmines, which must be avoided.
 Short vertical arrows  represent long unbranched paths.}
\label{Fig4}
\end{figure}

\section{An infinite path in the base-10 child graph}\label{SecB10}
Finally, we return to where we began, with base $10$.
We know from Theorems~\ref{Th4} and \ref{ThKon} that
the child graph contains an infinite path that begins at a node in the range $1$ to $99$.
We also know (see \eqref{Eq4.6}) that these paths start at one of  $50$ root-nodes.
By computer, we followed all the paths in these $50$ trees
until all but one had terminated. The sole survivor
turned out to be  one of the paths with root $20$.
The longest rival had root $30$, and persisted for $10^{365} - 82$ terms before reaching a landmine.

We can now focus on the surviving  start, and  define \seqnum{A367620}
to be the lexicographically earliest infinite sequence in $G_c$. We know it has initial terms
\begin{equation}\label{10.1}
20, 22, 46, 107, 178, 260, 262, 284, 327, 401, 415, 469, 564, 610, 616, 682, \dots\,.
\end{equation}
Its first branch-point  is at A367620(412987860) = 19999999918.

Starting from the beginning of this sequence, we have followed all its continuations
through the first $69$  branch-points.
By $30$ branch-points
there were three survivors left, one of which made these choices at the branch-points:
\begin{equation}\label{EqC1}
0 0 1 1 1 0 0 1 1 1 0 1 1 0 0 0 0 1 1 0 0 1 0 1 1 1 1 1 1 0
\end{equation}
(where $0$ = down, $1$ = up), and reached the branch-point  $29^{84}27_{10}$
at step about $10^{84.8}$.
The other two candidates failed at a later stage, so we can be confident
of the first $10^{84.8}$ terms of \seqnum{A367620}. However, unlike the ternary case,
there is no obvious pattern to \eqref{EqC1}.
At $69$ branch-points, there are several candidates still in the running,
all of which begin with \eqref{EqC1}.
We know that one or more of them will extend to infinity; it would be nice to know more.

\section{Acknowledgments}
We thank  Ivan N. Ianakiev  for a helpful comment,
and V\'{a}clav Kot\v{e}\v{s}ovec for the asymptotic estimate in \eqref{Eq7.3}.
Thanks also to Robert Dougherty-Bliss and Natalya Ter-Saakov for comments on the manuscript.

\medskip

\noindent MSC2020: 11B37, 11B75

\end{document}